\documentclass[12pt,a4paper]{article}

\usepackage[a4paper,margin=1in]{geometry}
\usepackage[T1]{fontenc}
\usepackage{microtype}
\usepackage{indentfirst}
\usepackage{titlesec}

\usepackage{amsmath,amssymb,amsthm}
\usepackage{newtxtext,newtxmath}
\allowdisplaybreaks[3]

\usepackage{enumitem}
\usepackage[hidelinks]{hyperref}
\hypersetup{
	pdftitle={Nice Partitions and Freeness Obstructions for Deformations of Graphic Arrangements},
	pdfauthor={Weikang Liang and Suijie Wang},
	pdfsubject={Hyperplane arrangements associated with gain graphs},
	pdfkeywords={hyperplane arrangements, gain graphs, affinographic arrangements, nice partitions, supersolvable arrangements, free arrangements, chordal graphs}
}

\titleformat{\section}{\normalfont\Large\bfseries}{\thesection.}{0.6em}{}
\titleformat{\subsection}{\normalfont\large\bfseries}{\thesubsection.}{0.6em}{}
\titlespacing*{\section}{0pt}{2.4ex plus 0.7ex minus 0.2ex}{1.2ex plus 0.2ex}
\titlespacing*{\subsection}{0pt}{2.0ex plus 0.5ex minus 0.2ex}{0.9ex plus 0.2ex}
\setlist[itemize]{leftmargin=2em,itemsep=0.2ex,topsep=0.4ex}
\setlist[enumerate]{leftmargin=2em,itemsep=0.2ex,topsep=0.4ex}

\theoremstyle{plain}
\newtheorem{theorem}{Theorem}[section]
\newtheorem{lemma}[theorem]{Lemma}
\newtheorem{proposition}[theorem]{Proposition}
\newtheorem{corollary}[theorem]{Corollary}

\theoremstyle{definition}
\newtheorem{example}[theorem]{Example}

\theoremstyle{remark}
\newtheorem*{remark*}{Remark}

\DeclareMathOperator{\codim}{codim}

\newcommand{\A}{\mathcal A}
\newcommand{\B}{\mathcal B}
\newcommand{\Sgain}{\mathcal S}
\newcommand{\K}{\mathbb K}

\numberwithin{equation}{section}
\begin{document}

\begin{center}
	\setlength{\parskip}{0pt}
	{\Large\bfseries
		Nice Partitions, Supersolvability, and Freeness in Deformations of Graphic Arrangements\par}
	
	\vspace{7pt}
	
	Weikang Liang$^{1}$,
	Suijie Wang$^{2,*}$
	and Yue Zhou$^{3}$\par
	
	\vspace{8pt}
	
	$^{1,3}$ 
	College of Science\\
	National University of Defense Technology\\
	Changsha 410073, Hunan, P. R. China\par
	
	\vspace{12pt}
	
	$^{2}$ 	School of Mathematics\\
	Hunan University\\
	Changsha 410082, Hunan, P. R. China\par
	
	\vspace{15pt}
	
	\begin{tabular}{@{}r@{\ }l@{}}
		Emails: 
		$^{1}$\href{mailto:liangweikang08@163.com}{liangweikang08@163.com},
		$^{2}$\href{mailto:wangsuijie@hnu.edu.cn}{wangsuijie@hnu.edu.cn}
		$^{3}$\href{mailto:yue.zhou.ovgu@gmail.com}{yue.zhou.ovgu@gmail.com}
	\end{tabular}\\[2pt]
	$^{*}$ Corresponding author\par
\end{center}
\vspace{1.2em}

\begin{abstract}
We study affine deformations \(\A(G_{\Sgain})\) of graphic arrangements
and their cones. Here \(G=([n],E(G))\) is a simple graph with finite gain
sets \(\Sgain=(S_{ij})\), and \(\A(G_{\Sgain})\) consists of the
hyperplanes \(x_i-x_j=a\) with \(a\in S_{ij}\).
We call \(G_{\Sgain}\) blockwise admissible if every block has a vertex
ordering \(v_1,\ldots,v_m\) satisfying
\(S_{v_kv_j}-S_{v_kv_i}\subseteq S_{v_iv_j}\) for every \(k\) and all
distinct \(i,j>k\). Every such ordering is a perfect elimination ordering.
We prove, for arbitrary \(G\), that
\[
\begin{aligned}
G_{\Sgain}\text{ is blockwise admissible}
&\Longleftrightarrow
c\A(G_{\Sgain})\text{ is supersolvable}\\
&\Longleftrightarrow
\A(G_{\Sgain})\text{ admits a nice partition}.
\end{aligned}
\]
We give a direct arrangement-theoretic proof of these equivalences.
For a block, the equivalence between admissibility and supersolvability
is already contained in Zaslavsky's characterization of supersolvable
graphic-lift lattices. On each block, we further show that every nice
partition is induced by an admissible ordering, and that its induced
edge classes are stars with distinct centers. Under these equivalent
conditions, we construct a maximal modular chain through the
hyperplane at infinity.
We also establish a necessary condition for freeness:
\[
c\A(G_{\Sgain})\text{ is free}
\quad\Longrightarrow\quad
G\text{ is chordal}.
\]
\end{abstract}

\paragraph*{Keywords.}
Hyperplane arrangements; gain graphs; affinographic arrangements; nice
partitions; supersolvable arrangements; free arrangements; chordal graphs

\paragraph*{2020 Mathematics Subject Classification.}
Primary 52C35; Secondary 05B35, 05C22, 13N15.

\section{Introduction}
Let \(G=([n],E(G))\) be a simple graph and let \(\K\) be a field.
The graphic arrangement \(\A(G)\) in \(\K^n\) consists of the
hyperplanes \(x_i-x_j=0\), where \(ij\in E(G)\). Its characteristic
polynomial is the chromatic polynomial of \(G\). For an arrangement
\(\B\), write \(L(\B)\) for its intersection poset, ordered by reverse
inclusion. A central arrangement \(\B\) is supersolvable if \(L(\B)\)
contains a maximal chain of modular elements. Stanley's theorem
characterizes supersolvability of \(\A(G)\) by chordality of \(G\)
\cite{Stanley1972,Stanley2007,MuStanley2015}: every cycle of length at
least four has a chord, or equivalently, \(G\) has a perfect elimination
ordering. A nice partition is an independent partition satisfying a
singleton condition in every nontrivial localization. Every maximal
modular chain induces a nice partition \cite{Terao1992}. For graphic
arrangements, conversely, the existence of a nice partition forces
chordality, and every nice partition is induced by a maximal modular chain
\cite{LiangWang2026}.

This paper studies affine deformations of graphic arrangements in which
each edge is replaced by a finite parallel class of hyperplanes. We
characterize nice partitions and supersolvability in terms of the graph
and the gain sets, and prove a graph-theoretic obstruction to freeness.
Let \(\Sgain=(S_{ij})\) be a family of finite subsets of \(\K\),
indexed by ordered pairs of distinct vertices, with
\(S_{ji}=-S_{ij}\) and \(S_{ij}\ne\emptyset\) exactly when
\(ij\in E(G)\). Here \(-S=\{-a\mid a\in S\}\). Write \(G_{\Sgain}\)
for \(G\) together with these gain sets, and put
\[
H_{ij}^{a}=\{x\in\K^n\mid x_i-x_j=a\},
\qquad
\A(G_{\Sgain})=\{H_{ij}^{a}\mid a\in S_{ij},\ i<j\}.
\]
These are affinographic arrangements associated with additive gain graphs
\cite{ForgeZaslavsky2007}; their intersection semilattices can also be
described using semimatroids \cite{Ardila2007}. Taking \(S_{ij}=\{0\}\)
on every edge recovers \(\A(G)\). Over a field of characteristic zero,
suitable gain sets on complete graphs give the Shi, Catalan, and Linial
arrangements \cite{PostnikovStanley2000}.

Introduce an additional coordinate \(y\), and let
\(cH_{ij}^{a}=\{(x,y)\in\K^{n+1}\mid x_i-x_j=ay\}\) and
\(H_\infty=\{y=0\}\). The cone of the affine arrangement is
\[
c\A(G_{\Sgain})
=\{cH_{ij}^{a}\mid a\in S_{ij},\ i<j\}\cup\{H_\infty\}.
\]
This is a central arrangement with hyperplane at infinity \(H_\infty\).
On the chart \(y=1\), its other hyperplanes recover the affine
arrangement.

For these deformations, the elimination ordering must respect the gain
sets. For \(A,B\subseteq\K\), write
\(A-B=\{a-b\mid a\in A,\ b\in B\}\). An ordering
\(v_1,\ldots,v_n\) is \emph{\(\Sgain\)-admissible} if, for every \(k\)
and all distinct \(i,j>k\),
\[
S_{v_kv_j}-S_{v_kv_i}\subseteq S_{v_iv_j}.
\]
If \(v_kv_i\) and \(v_kv_j\) are edges, the left-hand side is nonempty,
so \(v_iv_j\) must also be an edge. Thus admissibility includes the
perfect-elimination condition; when every edge has gain set \(\{0\}\),
the two conditions coincide.

Our first result characterizes nice partitions of the affine arrangement
by admissibility and supersolvability of the cone. We state it first for
blocks. A \emph{block} is a maximal connected subgraph without a cut
vertex; isolated vertices and bridges are included as blocks of orders
one and two, respectively.

\begin{theorem}\label{thm:main}
	Let \(\A(G_{\Sgain})\) be defined by the gain sets \(\Sgain\) as above.
	Assume that \(G\) is a block. Then the
	following statements are equivalent.
	\begin{enumerate}[label=(\arabic*)]
		\item\label{main:ss}
		The cone \(c\A(G_{\Sgain})\) is supersolvable.

		\item\label{main:ss-infty}
		The lattice \(L(c\A(G_{\Sgain}))\) has a maximal modular chain
		passing through \(H_\infty\).

		\item\label{main:nice}
		The affine arrangement \(\A(G_{\Sgain})\) admits a nice partition.

		\item\label{main:order}
		\(G_{\Sgain}\) admits an \(\Sgain\)-admissible ordering.
	\end{enumerate}
\end{theorem}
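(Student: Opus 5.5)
We will prove the cycle \ref{main:order}\(\Rightarrow\)\ref{main:ss-infty}\(\Rightarrow\)\ref{main:ss}\(\Rightarrow\)\ref{main:nice}\(\Rightarrow\)\ref{main:order}. The implication \ref{main:ss-infty}\(\Rightarrow\)\ref{main:ss} is trivial. For \ref{main:ss}\(\Rightarrow\)\ref{main:nice} we use that a maximal modular chain of \(c\A(G_{\Sgain})\) induces a nice partition of the cone \cite{Terao1992}. We then pass to the affine arrangement by deleting the block containing \(H_\infty\) and restricting the remaining blocks to the chart \(y=1\). The nontrivial localizations of \(\A(G_{\Sgain})\) correspond to localizations of \(c\A(G_{\Sgain})\) at flats not contained in \(H_\infty\). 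Since \(H_\infty\) does not lie in such a localization, the singleton condition survives deletion. We must also check independence: a subset of one block that is dependent in the affine semilattice remains dependent in the cone, so each block stays independent. (If needed, we instead go through \ref{main:ss-infty}, where the chain starts at \(H_\infty\) and this bookkeeping is cleaner.)

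For \ref{main:order}\(\Rightarrow\)\ref{main:ss-infty}, let \(v_1,\dots,v_n\) be an admissible ordering. Set \(X_1=H_\infty\) and, for \(m\ge 2\), let \(X_m=H_\infty\cap\bigcap_{k>n-m+1}\{x_{v_k}=x_{v_{n-m+1}}\}\). Equivalently, \(X_m\) is the flat where \(y=0\) and the last \(m\) vertices, which are connected in the block, are all identified. Modularity of each \(X_m\) follows from the standard criterion: for any two hyperplanes \(H,H'\) not above \(X_m\), the flat \(H\wedge H'\) must contain a hyperplane above \(X_m\). The only relevant hyperplanes are \(cH^a_{v_kv_i}\) and \(cH^b_{v_kv_j}\) with \(k\) earlier and \(i,j\) later. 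Their intersection contains \(cH^{b-a}_{v_iv_j}\), and this hyperplane is present exactly because \(b-a\in S_{v_iv_j}\) by admissibility. Two hyperplanes on the same edge meet inside \(H_\infty\), which lies below \(X_m\). Since \(G\) is a block, and hence connected, the intersection of all hyperplanes is the one-dimensional flat \(\{y=0,\ x_1=\dots=x_n\}\). The rank function then shows that the chain has full length. The chain must be built in the reverse order, peeling off \(v_1\) first at the top, and we will verify that this ordering is the one matching the admissibility condition.

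The main obstacle is \ref{main:nice}\(\Rightarrow\)\ref{main:order}, and here we adapt the graphic-arrangement argument of \cite{LiangWang2026}. Let \(\pi=(\pi_1,\dots,\pi_r)\) be a nice partition. We first show by induction that for each \(m\) there is a flat at which \(\pi\) localizes to a nice partition of the arrangement of a smaller block, so that we can peel off one block \(\pi_p\) at a time. Using the rank-two localizations, we then show that each block \(\pi_p\) consists of all hyperplanes on the edges from one vertex \(v\) to a set of other vertices, with centers distinct across blocks. For two hyperplanes \(H^a_{vu},H^b_{vw}\) in the same block, the rank-two flat they span must have a singleton in some other block. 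Because \(\pi_p\) is independent, the two cannot come from the same edge unless the gain set is a singleton. The singleton condition applied to a triangle \(v,u,w\) then forces the existence of \(H^{b-a}_{uw}\), that is, \(b-a\in S_{uw}\). Ordering the centers by the reverse block order and appending the final vertex yields an ordering, and the triangle computation gives exactly admissibility. The delicate points are twofold. First, we must rule out partitions whose blocks are not stars; this uses the fact that \(G\) is \(2\)-connected, so that there are no independent cross-edge configurations. Second, we must show that parallel hyperplanes on the same edge land in the same block. For this we plan a counting argument: since \(|\pi|\) equals the rank \(n-1\), the Poincaré polynomial factorization \(\prod(1+|\pi_p|t)\) is forced, and we compare it with the number of hyperplanes, \(\sum|S_{ij}|\).
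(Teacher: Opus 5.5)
\textbf{Gap in (1)$\Rightarrow$(3).} This step fails as written, and your fallback does not close the cycle. A maximal modular chain of $c\A(G_{\Sgain})$ need not start at $H_\infty$, and then $H_\infty$ shares its part with other hyperplanes. Deleting that whole part leaves a partition of a proper subarrangement. Deleting only $H_\infty$ can destroy independence.

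For example, take $G=K_2$ and $S_{12}=\{0,1\}$. The cone has rank two, so $\widetilde V<cH_{12}^{0}<\hat{1}$ is a maximal modular chain. Its partition is $\{cH_{12}^{0}\},\{cH_{12}^{1},H_\infty\}$. The resulting affine partition $\{H_{12}^{0}\},\{H_{12}^{1}\}$ has a $2$-section of parallel hyperplanes, so it is not independent.

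Your justification breaks exactly here. An affine section with empty intersection corresponds to a cone section whose intersection lies in $H_\infty$, and that cone intersection can still have full codimension. So affine dependence does not transfer to the cone. Note also that independence is a condition on sections (one hyperplane from each part), not on the parts themselves. The paper's transfer, Proposition~\ref{prop:ss-infty-to-nice}, works only because $\{H_\infty\}$ is a singleton part: it can be added to such a section to give a dependent $(p+1)$-section.

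So you must go through (2). That requires (1)$\Rightarrow$(2), which your proposal never addresses and which is the hardest step (Theorem~\ref{thm:ss-implies-infty}). The missing idea runs as follows:
\begin{itemize}
\item Take the modular coatom $Y$ of some maximal modular chain. If $Y\not\subseteq H_\infty$, then $Y$ and $H_\infty$ are complements, so $[\widetilde V,Y]\cong[H_\infty,\hat{1}]$. Hence $\B_Y$ contains exactly one hyperplane per edge.
\item The substitution $x_i\mapsto x_i-t_iy$, for a point $(t,1)\in Y$, makes these the gain-$0$ hyperplanes.
\item The coatom criterion then forces the edges with nonzero gains to pairwise meet and to satisfy a local equality of gains. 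Hence they span a clique.
\item A suitable simplicial vertex then yields a modular coatom inside $H_\infty$.
\item Strong induction on rank, the cone-product lemma, and lifting finish the argument.
\end{itemize}
Alternatively, you could cite Zaslavsky for (1)$\Leftrightarrow$(4). As it stands, (1) is not shown to imply any other condition.

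\textbf{(3)$\Rightarrow$(4) is only a sketch.}
\begin{itemize}
\item That each parallel class lies in one part needs no counting: two parallel hyperplanes in different parts form a $2$-section with empty intersection. Your Poincar\'e-polynomial comparison could not prove this anyway, since the linear coefficient of $\prod(1+|\pi_p|t)$ equals $|\A|$ for every partition.
\item The star property does not use $2$-connectivity. It comes from the singleton condition at $H_{ij}^{a}\cap H_{kl}^{b}$ for disjoint edges, together with the fact that each triangle receives exactly two parts.
\item The block hypothesis is needed for distinct centers. That step requires chordality plus a shortest-path propagation argument (Proposition~\ref{prop:star-preimages}).
\item A nice partition carries no order on its parts, so ``reverse block order'' is undefined. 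Instead, orient each edge away from its star center, use independence to exclude directed cycles, and take a linear extension (Proposition~\ref{prop:nice-star-order}). Your triangle computation then gives admissibility correctly.
\end{itemize}

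\textbf{A smaller fix in (4)$\Rightarrow$(2).} The two-hyperplane criterion characterizes modularity only for coatoms. Apply it to $X_m$ inside $[\widetilde V,X_{m+1}]$ and lift modularity as in Lemma~\ref{lem:lifting}. With that repair, this part matches Proposition~\ref{prop:order-to-ss-infty}.
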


The equivalence of \ref{main:ss} and \ref{main:order} is already
contained in Zaslavsky's characterization of supersolvable graphic-lift
lattices \cite[Theorem~3.2]{Zaslavsky2001}. Indeed, let
\(\Phi_{\Sgain}\) be the gain multigraph with an edge of gain \(a\)
between \(i\) and \(j\) for every \(a\in S_{ij}\). The cone represents
the complete lift matroid \(L_0(\Phi_{\Sgain})\), and admissibility is
the set-valued form of link-simplicial elimination. See
\cite{Koban2004} for corrections to the description of modular flats
and an improved proof, and \cite{SuyamaTorielliTsujie2024} for the
application of Zaslavsky's criteria to affinographic and bias arrangements.

The affine nice-partition characterization is the new equivalence in
Theorem~\ref{thm:main}. The proof also describes individual nice
partitions: on a block, every nice partition arises from an admissible
ordering by grouping hyperplanes according to the earlier endpoint of
each defining edge. To recover this ordering, we show that the partition
induces a decomposition of the edges into stars and that independence
makes the resulting orientation acyclic. We also prove the
supersolvability assertions directly in the intersection lattice of the
cone. In particular, a supersolvable cone has a maximal modular chain
through \(H_\infty\). The partition induced by such a chain has
\(\{H_\infty\}\) as a singleton part; removing this part and passing to
\(y=1\) gives a nice partition of the affine arrangement.

For arbitrary graphs, the conditions are imposed separately on each
block. We call \(G_{\Sgain}\) \emph{blockwise admissible} if the
restricted gain graph on every block has an admissible ordering.
Up to empty factors, its intersection poset is the product of the
intersection posets of the block arrangements. We prove that modular chains
through the hyperplanes at infinity of the block cones combine to give such
a chain for the full cone. Consequently,
Corollary~\ref{cor:blockwise-general} gives
\[
\begin{aligned}
G_{\Sgain}\text{ is blockwise admissible}
&\Longleftrightarrow c\A(G_{\Sgain})\text{ is supersolvable}\\
&\Longleftrightarrow \A(G_{\Sgain})\text{ admits a nice partition}.
\end{aligned}
\]
A single admissible ordering of the whole graph is more restrictive,
because it also imposes compatibility between blocks at their cut
vertices.

A central arrangement is \emph{free} if its module of logarithmic
derivations is free over the coordinate ring. Every supersolvable
arrangement is free \cite{JambuTerao1984}; hence the equivalent conditions
above imply that \(c\A(G_{\Sgain})\) is free. For ordinary graphic
arrangements, freeness
is itself equivalent to chordality \cite{EdelmanReiner1994}. For
affine deformations, freeness need not imply supersolvability. Over a
field of characteristic zero, the Shi and Catalan cones are free
\cite{AbeTerao2011}, but for \(n\ge3\) their affine arrangements admit
no nice partition, by Theorem~\ref{thm:main} and
Example~\ref{ex:shi-catalan-linial}. Our second result proves that, for
arbitrary finite gain sets over any field, freeness of
\(c\A(G_{\Sgain})\) forces \(G\) to be chordal.

\begin{theorem}\label{thm:free-implies-chordal}
	Let \(\A(G_{\Sgain})\) be defined by the gain sets \(\Sgain\) as above.
	If the cone
	\(c\A(G_{\Sgain})\) is free, then \(G\) is chordal.
\end{theorem}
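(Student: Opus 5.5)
Freeness is preserved under localization, so the plan is to reduce to the case where \(G\) itself is an induced cycle and then check that this case is never free. Suppose \(G\) is not chordal and choose an induced cycle \(C\) of length \(m\ge4\) on vertices \(v_1,\dots,v_m\).

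\textbf{Reduction to the cycle.} First I reduce to a central graphic-type arrangement supported on \(C\). Pick a point \(p=(x,y)\) with \(y=0\) and \(x_{v_1}=\dots=x_{v_m}=0\), and all other coordinates generic and distinct; for instance, choose them algebraically independent over the prime field extended by all gains, passing to a field extension if needed. Freeness is invariant under field extension. Now consider which hyperplanes contain \(p\). The hyperplane \(H_\infty\) does. A hyperplane \(cH_{ij}^a\) contains \(p\) exactly when \(x_i=x_j\), that is, when both \(i,j\) lie in \(C\). Since \(C\) is induced, these are precisely the hyperplanes \(cH_{v_kv_{k+1}}^a\).

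So the localization is \(\B=\{H_\infty\}\cup\{cH^a_{v_kv_{k+1}}\}\), which is essentially the cone of \(\A(C_{\Sgain|_C})\) times an empty factor. Localizations of free arrangements are free (Stanley–Terao), so \(\B\) is free. It therefore suffices to show that the cone of a gain cycle of length \(m\ge4\) is never free.

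\textbf{The cycle is not free.} For this step I would use the restriction to \(H_\infty\) or, better, a further deletion/restriction argument. Restricting \(\B\) to \(H_\infty\) collapses each parallel class \(\{cH^a_{v_kv_{k+1}}\}_a\) to the single graphic hyperplane \(x_{v_k}=x_{v_{k+1}}\). The result is the graphic arrangement of the cycle \(C_m\), which is not free by Edelman–Reiner, since \(C_m\) is not chordal. However, restriction of a free arrangement is not free in general, so this needs Ziegler's multirestriction. Ziegler's theorem says that if \(\B\) is free with exponents \((1,d_2,\dots,d_\ell)\), then the Ziegler multirestriction \((\B^{H_\infty},\kappa)\) is free with exponents \((d_2,\dots,d_\ell)\). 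Here the multiplicity \(\kappa\) of the edge \(v_kv_{k+1}\) is \(|S_{v_kv_{k+1}}|\).

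The problem is thus reduced to showing that a multiarrangement on the cycle graphic arrangement, with arbitrary positive multiplicities, is not free when \(m\ge4\). The main obstacle is exactly this claim. The graphic arrangement of \(C_m\), modulo the diagonal, is the generic arrangement of \(m\) hyperplanes in rank \(m-1\). Generic arrangements with \(m>\ell\ge3\) are not free for any multiplicity: Yoshinaga/Abe showed that multiarrangements whose hyperplanes are in general position with \(|\A|>\ell\ge3\) are never free. I would invoke that result, applied with \(\ell=m-1\ge3\).

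A more self-contained alternative is to compute characteristic polynomials. The characteristic polynomial of \(c\A(C_{\Sgain})\) can be computed by deletion–contraction, and one shows that it does not factor over \(\mathbb Z\) with nonnegative integer roots, as Terao's factorization theorem would require. I expect that computation to fail in some cases, for instance when the gains create many coincidences. The multirestriction route is therefore my primary plan, and its key input is the nonfreeness of multiarrangements in general position.
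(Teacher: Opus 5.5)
Your proof is correct. Its first half is the paper's: the paper also localizes at a flat inside \(H_\infty\), namely \(X=\{y=0,\ x_u=x_v \text{ for } u,v\in V(C)\}\), which removes the need for a generic point and a field extension.

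The routes differ in how they show that the cycle cone \(\B\) is not free. You pass to the Ziegler multirestriction at \(H_\infty\), which puts multiplicities \(n_i=|S_i|\) on \(m\) hyperplanes in general position in rank \(m-1\). You then quote total non-freeness of generic arrangements with \(|\A|>\ell\ge3\). I believe this result is usually attributed to Ziegler's original multiarrangement paper rather than to Yoshinaga or Abe, so check the attribution.

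The paper proves the needed case by hand. It takes a basis \(\theta_E,\delta_1,\dots,\delta_{m-1}\) of \(D(\B)\) with \(\delta_j(y)=0\); the restrictions of the \(\delta_j\) to \(y=0\) are exactly a basis of your multirestriction. From \(Q_i\mid\delta_j(u_i)\) and \(u_1+\cdots+u_m=0\), a determinant argument gives \(d_j\ge n_{j+1}\) (with \(n_1\le\cdots\le n_m\)). It then compares the first two coefficients of \(\chi(\B,t)=(t-1)\prod_j(t-d_j)\) with \(b_1=1+\sum_i n_i\) and \(b_2=\sum_i n_i+\sum_{i<k}n_in_k\). This yields \(\sum_j d_j^2=\sum_i n_i^2\), which the degree bounds contradict. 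In effect, this is an inlined proof of the special case of your cited theorem (\(\ell+1\) generic hyperplanes in \(\K^\ell\)), using the cone's ordinary characteristic polynomial and Terao's factorization instead of characteristic polynomials of multiarrangements.

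What each route buys:
\begin{itemize}
\item Your route is shorter and locates the obstruction conceptually in the multiarrangement at infinity, which the paper only discusses after the proof.
\item The paper's route is self-contained and visibly valid over every field, as the theorem requires. If you cite instead, you must check that the quoted result holds in arbitrary characteristic.
\end{itemize}

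Your doubt about a pure characteristic-polynomial argument is also justified. The paper uses only \(b_1\) and \(b_2\), which do not see coincidences among the gains when \(m\ge4\), and the degree bounds are indispensable. For instance, \(n=(1,1,1,3)\) and \(d=(2,2,2)\) satisfy \(\sum_j d_j=\sum_i n_i\) and \(\sum_j d_j^2=\sum_i n_i^2\); only \(d_3\ge n_4\) excludes them.
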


The proof uses localizations at flats contained in \(H_\infty\).
An induced cycle of length at least four gives, up to empty product
factors, the cone over the deformation supported on that cycle.
We prove that every such cone is non-free by comparing degree bounds
for logarithmic derivations with the coefficients of the characteristic
polynomial. The argument works for arbitrary finite
nonempty gain sets over every field.
Since localization preserves freeness, this proves
Theorem~\ref{thm:free-implies-chordal}.

Section~2 gives the preliminaries and the block reduction.
Sections~3 and~4 prove Theorem~\ref{thm:main}. Section~5 proves
Theorem~\ref{thm:free-implies-chordal} and discusses the Ziegler
restriction at \(H_\infty\) in connection with Terao's conjecture.
Section~6 derives the characteristic polynomial from admissible
orderings and gives examples.

\section{Preliminaries}

\subsection{Arrangements and gain graphs}

Let \(\A\) be a finite arrangement of affine hyperplanes in
\(V = \K^n\). The intersection poset \(L(\A)\) consists of \(V\)
and all nonempty intersections of hyperplanes in \(\A\), ordered by
reverse inclusion. Its minimal element is \(\hat{0}=V\). If
\(U\le W\) in \(L(\A)\), then
\([U,W]=\{Z\in L(\A)\mid U\le Z\le W\}\) denotes the interval from \(U\)
to \(W\). For \(X\in L(\A)\), write \(r(X)=\codim_V X\), and set
\(r(\A)=\max\{r(X)\mid X\in L(\A)\}\).
For \(X\in L(\A)\), the localization of \(\A\) at \(X\) is
\(\A_X=\{H\in\A\mid X\subseteq H\}\).
We call \(\A\) central if \(\bigcap_{H \in \A}H \neq \emptyset\). In
this case \(L(\A)\) is a ranked lattice. We denote its maximal element by
\(\hat{1}=\bigcap_{H\in\A}H\),
so that \(r(\A)=r(\hat{1})\).
An element of rank \(r-1\) in a ranked lattice of rank \(r\) is called a
coatom.

For a finite affine arrangement \(\A\) in \(V\), let \(\mu\) be the
M\"obius function of \(L(\A)\). Its characteristic polynomial is
\[
\chi(\A,t)=\sum_{X\in L(\A)}\mu(V,X)t^{\dim X}.
\]

When working with a central arrangement, we translate a point of
its common intersection to the origin.
Let now \(\A\) be central. Let \(\wedge\) and \(\vee\) denote the meet and
join in \(L(\A)\). An element \(X\in L(\A)\) is called modular if, for
every \(Y\in L(\A)\),
\[
r(X)+r(Y)=r(X\wedge Y)+r(X\vee Y).
\]
Equivalently, \(X\) is modular if \(X+Y\in L(\A)\) for every
\(Y\in L(\A)\); see
\cite[Corollary~2.26]{OrlikTerao1992}. A central arrangement \(\A\) of
rank \(r\) is supersolvable if \(L(\A)\) contains a maximal chain
\[
V=X_0<X_1<\cdots<X_r=\hat{1}
\]
such that every \(X_i\) is modular.

If \(\A\) is central with defining polynomial \(Q\) in a coordinate ring
\(S\), its module of logarithmic derivations is
\[
D(\A)=\{\theta\in\operatorname{Der}_{\K}(S)\mid \theta(Q)\in QS\}.
\]
Here \(\operatorname{Der}_{\K}(S)\) denotes the \(S\)-module of
\(\K\)-derivations of \(S\).
The arrangement \(\A\) is called free if \(D(\A)\) is a free \(S\)-module;
see \cite[Chapter~4]{OrlikTerao1992}.

Throughout \(G=([n],E(G))\), with \([n]=\{1,\ldots,n\}\), is a simple
graph. For a subset \(T\subseteq\K\), write \(-T=\{-a\mid a\in T\}\). Let
\(\Sgain=(S_{ij})\) be a family of finite subsets of \(\K\), indexed by
ordered pairs of distinct vertices, such that \(S_{ji}=-S_{ij}\) and
\(S_{ij}\ne\emptyset\) if and only if \(ij\in E(G)\). We write
\(G_{\Sgain}\) for the graph \(G\) together with this family of sets.
For an edge \(ij\), we write
\(\A_{ij}=\{H_{ij}^{a}\mid a\in S_{ij}\}\), the parallel class associated
with \(ij\). Thus
\(H_{ji}^{-a}=H_{ij}^{a}\), and \(\A_{ij}=\A_{ji}\).
The cone \(c\A(G_{\Sgain})\) is the arrangement in
\(\widetilde V = \K^{n+1}\), with coordinates
\((x,y)=(x_1,\ldots,x_n,y)\), defined by
\[
c\A(G_{\Sgain})
=
\{cH_{ij}^{a}\mid a\in S_{ij},\ i<j\}\cup\{H_\infty\},
\]
where
\[
cH_{ij}^{a}
=
\{(x,y)\in\K^{n+1}\mid x_i-x_j=ay\},
\]
and \(H_\infty=\{(x,y)\in\K^{n+1}\mid y=0\}\).
We call \(H_\infty\) the hyperplane at infinity.
On the affine chart \(y=1\), the hyperplanes of
\(c\A(G_{\Sgain})\setminus\{H_\infty\}\) are naturally identified with the
hyperplanes of \(\A(G_{\Sgain})\).

For subsets \(A,B\subseteq\K\), write
\(A-B=\{a-b\mid a\in A,\ b\in B\}\). Recall from the Introduction that an
ordering is \(\Sgain\)-admissible if, for every \(k\) and all distinct
\(i,j>k\),
\[
S_{v_kv_j}-S_{v_kv_i}\subseteq S_{v_iv_j}.
\]
In particular, if
both \(v_kv_i\) and \(v_kv_j\) are edges, then the left-hand side in the
defining inclusion is nonempty, so the inclusion forces
\(S_{v_iv_j}\ne\emptyset\), hence \(v_iv_j\in E(G)\). Thus the defining
condition already makes the ordering a perfect elimination ordering; in the
graphic case \(S_{ij}=\{0\}\), it is exactly the usual perfect-elimination
condition.

\begin{example}\label{ex:triangle-gain-check}
	Let \(G\) be the triangle on vertices \(1,2,3\), and test the ordering
	\(1,2,3\). The only nontrivial admissibility condition is
	\[
	S_{13}-S_{12}\subseteq S_{23}.
	\]
	Thus the gains
	\[
	S_{12}=\{0\},\qquad S_{13}=\{0,1\},\qquad S_{23}=\{0,1\}
	\]
	make the ordering admissible, while replacing \(S_{23}\) by \(\{0\}\)
	makes it fail. Geometrically, the intersection
	\(H_{12}^{0}\cap H_{13}^{1}\) is contained in \(H_{23}^{1}\); the
	admissibility condition says that this forced third hyperplane must
	already be present in the arrangement.
\end{example}

\subsection{Nice partitions and block reduction}

Let \(\A\) be an affine arrangement and let
\(\pi=(\pi_1,\ldots,\pi_s)\) be a partition of \(\A\). A
\emph{\(p\)-section} of \(\pi\) is a set
\(\{H_1,\ldots,H_p\}\) of hyperplanes of \(\A\) such that no two of
them lie in the same part of \(\pi\). Such a section is called
\emph{independent} if \(H_1\cap\cdots\cap H_p\) is nonempty of
codimension \(p\). The
partition \(\pi\) is \emph{independent} if every section of \(\pi\) is
independent.
The partition \(\pi\) is \emph{nice} if it is independent and satisfies
the following singleton condition: for
every \(X\in L(\A)\setminus\{V\}\), the partition of \(\A_X\) obtained
by intersecting the parts of \(\pi\) with \(\A_X\) and deleting the
empty intersections has a singleton part. Equivalently,
\(|\pi_i\cap \A_X|=1\) for some \(i\).
For the empty arrangement, the empty partition is regarded as nice.
A maximal modular chain in a central arrangement induces a nice partition: if
\(\A\) is central and
\[
V=X_0<X_1<\cdots<X_r=\hat{1}
\]
is a maximal chain of modular elements in \(L(\A)\), then the partition
defined, for \(i=1,\ldots,r\), by
\(\pi_i=\A_{X_i}\setminus \A_{X_{i-1}}\)
is nice \cite[Example~2.4]{Terao1992}; see also
\cite[Proposition~2.67]{OrlikTerao1992}.

We use the usual product notation for affine arrangements: if \(\A_i\) is
an affine arrangement in \(V_i\), \(i=1,2\), then
\(\A_1\times\A_2\) is the arrangement in \(V_1\oplus V_2\) consisting of
the hyperplanes \(H_1\oplus V_2\), with \(H_1\in\A_1\), and
\(V_1\oplus H_2\), with \(H_2\in\A_2\).
For block reduction, let a \emph{block} mean a maximal connected subgraph
with no cut vertex; this convention includes isolated vertices and bridges.
Let
\(G_1,\ldots,G_s\) be the blocks of \(G\),
and let \(\Sgain_\nu\) be the restriction of \(\Sgain\) to \(G_\nu\).
We say that \(G_{\Sgain}\) is \emph{blockwise admissible} if every
\((G_\nu)_{\Sgain_\nu}\) admits an \(\Sgain_\nu\)-admissible ordering.
Put \(\A_\nu=\A((G_\nu)_{\Sgain_\nu})\). Compatibility of the equations can
be checked block by block: when two blocks meet at a cut vertex, translate a
solution on one block so that the two values at that vertex agree.
Conversely, the only compatibility conditions among these equations
occur around cycles, and every cycle lies in one block. Hence
\[
L(\A(G_{\Sgain}))
\cong
L(\A_1\times\cdots\times \A_s)
\cong
L(\A_1)\times\cdots\times L(\A_s).
\]
This is the analogue, for these deformations of graphic arrangements, of
the standard product decomposition of graphic lattices;
see \cite[Lecture~4, Exercise~16]{Stanley2007}.
Products of
arrangements preserve and reflect nice partitions
\cite[Proposition~3.29]{HogeRoehrle2016}. The proof of this product result is
combinatorial and applies unchanged to the affine definition above. The
product statement reduces the affine nice-partition question to blocks. The
next lemma gives the
corresponding reduction for cones, where the shared hyperplane at infinity
replaces an ordinary product decomposition.

\begin{lemma}\label{lem:cone-product}
	Let \(\A_i\) be an affine arrangement in \(V_i\), and let
	\(H_{\infty,i}\) be the hyperplane at infinity of \(c\A_i\), for
	\(i=1,2\). Put \(\B=c(\A_1\times\A_2)\), and let
	\(H_\infty\) be the hyperplane at infinity of \(\B\).
	\begin{enumerate}[label=(\roman*)]
		\item\label{cone-product:reflection}
		If \(\B\) is supersolvable, then \(c\A_1\) and \(c\A_2\) are
		supersolvable.

		\item\label{cone-product:gluing}
		If \(L(c\A_i)\) has a maximal modular chain passing through
		\(H_{\infty,i}\) for \(i=1,2\), then \(L(\B)\) has a maximal modular
		chain passing through \(H_\infty\).
	\end{enumerate}
\end{lemma}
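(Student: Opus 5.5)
We set up coordinates first. Let \(V_1=\K^{n_1}\) and \(V_2=\K^{n_2}\), so that \(\B\) lives in \(\K^{n_1+n_2+1}\) with coordinates \((x,x',y)\). The cones \(c\A_1\) and \(c\A_2\) live in \(\K^{n_1+1}\) and \(\K^{n_2+1}\). The basic observation is that
\[
c\A_1\cong \B_{Z_1},\qquad Z_1=\bigcap_{H\in c\A_1}H\times\K^{n_2},
\]
where we view each hyperplane of \(c\A_1\) as a hyperplane of \(\B\) not involving \(x'\). More precisely, the hyperplanes of \(\B\) containing the flat \(W_1=\bigcap_{H\in\A_1}cH\cap H_\infty\) are exactly \(\{cH\mid H\in\A_1\}\cup\{H_\infty\}\). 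So \(\B_{W_1}\) is a copy of \(c\A_1\) times an empty factor, and \(L(\B_{W_1})\cong[\widetilde V,W_1]\cong L(c\A_1)\). The same holds for \(\A_2\).

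For part~\ref{cone-product:reflection}, we use the standard fact that localizations of supersolvable arrangements are supersolvable. Concretely, if \(X\) is modular in \(L(\B)\) and \(W\in L(\B)\), then \(X\wedge W\) is modular in the interval \([\widetilde V,W]\). Hence a maximal modular chain \(X_0<\cdots<X_r\) yields, after removing repetitions, a maximal modular chain \(X_i\wedge W_1\) in \([\widetilde V,W_1]\). This is the argument of Stanley's lemma on modularity in lower intervals, and it can also be cited from \cite{OrlikTerao1992}. Thus \(c\A_1\) is supersolvable, and likewise \(c\A_2\).

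For part~\ref{cone-product:gluing}, let \(\widetilde V=Y_0<Y_1=H_{\infty,1}<\cdots<Y_{r_1}\) and \(\widetilde V=Z_0<Z_1=H_{\infty,2}<\cdots<Z_{r_2}\) be the given chains. A chain through \(H_{\infty,i}\) can be reordered so that \(H_{\infty,i}\) is its atom: every modular chain through \(H_\infty\) passes through a rank-one element below it, namely \(H_\infty\) itself. We then lift these to \(\B\). Put
\[
\widetilde Y_k=\{(x,x',y)\mid (x,y)\in Y_k\}\quad\text{and}\quad \widetilde Z_l=\{(x,x',y)\mid (x',y)\in Z_l\}.
\]
The proposed chain is
\[
\widetilde V<H_\infty=\widetilde Y_1<\widetilde Y_2<\cdots<\widetilde Y_{r_1}<\widetilde Y_{r_1}\cap\widetilde Z_2<\cdots<\widetilde Y_{r_1}\cap\widetilde Z_{r_2},
\]
where \(\widetilde Y_1=\widetilde Z_1=H_\infty\). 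The ranks increase by one at each step, and the total rank is \(r_1+r_2-1=r(\B)\). So the chain is maximal, and it passes through \(H_\infty\).

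The main step is modularity. We use the criterion that \(X\) is modular iff \(X+U\in L(\B)\) for all \(U\in L(\B)\). Every flat \(U\) of \(\B\) has one of two forms. If \(U\not\subseteq H_\infty\), then \(U=c(U_1\times U_2)\) with \(U_i\) affine flats. If \(U\subseteq H_\infty\), then \(U=(P_1\times P_2)\times\{0\}\), where \(P_i\) is the direction-subspace flat of \(L(c\A_i)\) lying in \(H_{\infty,i}\), or \(P_i=\K^{n_i}\).

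For \(X=\widetilde Y_k\), which does not involve \(x'\), the sum \(X+U\) is \((Y_k+\pi_1U)\times\K^{n_2}\)-type. Here \(\pi_1U\) is the projection to the \((x,y)\)-coordinates, and it is a flat of \(c\A_1\): either the cone of \(U_1\) or a flat inside \(H_{\infty,1}\). Modularity of \(Y_k\) then gives \(X+U\in L(\B)\). When \(k\ge1\), \(X\subseteq H_\infty\), and one must check the case \(U\not\subseteq H_\infty\) carefully. There \(X+U\) contains points with \(y\ne0\) and equals \(\{(x,x',y)\mid (x,y)\in Y_k+cU_1\}\). For \(X=\widetilde Y_{r_1}\cap\widetilde Z_l\), the first factor \(Y_{r_1}\) is the center of \(c\A_1\), so the sum decouples into the \(\A_2\)-side sum \(Z_l+\pi_2U\) plus the full \(x\)-direction span. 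This lies in \(L(\B)\) by modularity of \(Z_l\).

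I expect the main obstacle to be this bookkeeping: the coordinate \(y\) is shared between the factors, so \(L(\B)\) is not the product \(L(c\A_1)\times L(c\A_2)\). The verification must therefore use that both chains contain \(H_\infty\), which makes the \(y\)-direction already absorbed in each \(X\), and so the sums split factorwise.
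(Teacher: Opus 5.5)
Your route is the paper's. In (i) you localize at the same flat \(W_1\) (the paper's \(F_1\)) and use that lower intervals of a supersolvable lattice are supersolvable. In (ii) you build the same concatenated chain and test modularity via \(X+U\in L(\B)\). Part (i) and the first segment of the chain are fine: \(\widetilde Y_k\) contains the whole \(x'\)-direction, so \(\widetilde Y_k+U=\pi_1^{-1}(Y_k+\pi_1U)\) for every subspace \(U\), and you correctly checked \(\pi_1U\in L(c\A_1)\).

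\textbf{The gap is in the second segment,} which is the step you yourself flagged. For \(X=\widetilde Y_{r_1}\cap\widetilde Z_l\) you assert that \(X+U\) is \(Z_l+\pi_2U\) ``plus the full \(x\)-direction span''. As written, this says \(X+U\) is the pullback \(\pi_2^{-1}(Z_l+\pi_2U)\), and that is false. Write \(T_1=T_1'\times\{0\}\) for the center of \(c\A_1\) and \(Z_l=Z_l'\times\{0\}\). Then \(X=T_1'\times Z_l'\times\{0\}\), which contains only \(T_1'\subsetneq\K^{n_1}\) in the \(x\)-directions when \(\A_1\ne\emptyset\). Taking \(U=X\) gives \(X+U=X\), which is not a pullback from the second factor. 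For \(U=c(U_1\times U_2)\not\subseteq H_\infty\), the sum is not even a product of an \(x\)-part with an \((x',y)\)-part, because the shared \(y\) couples the two sides. So the mirror image of your first-segment argument does not apply, and modularity of \(Z_l\) is not yet connected to \(X+U\).

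The missing ingredient is the fiber-product identity that the paper proves once for all chain elements. Suppose \(M=\pi_1^{-1}(M_1)\cap\pi_2^{-1}(M_2)\) with \(M_i\in L(c\A_i)\) and \(M_i\subseteq H_{\infty,i}\). Suppose also \(U=\pi_1^{-1}(U_1)\cap\pi_2^{-1}(U_2)\) with \(U_i\in L(c\A_i)\); every flat of \(\B\) has this form, since all hyperplanes of \(\B\) are pullbacks. Then
\[
M+U=\pi_1^{-1}(M_1+U_1)\cap\pi_2^{-1}(M_2+U_2).
\]
For \(\supseteq\), write a vector on the right as \((z_1,z_2,y)\) with \((z_i,y)=(m_i,0)+(u_i,y)\), where \((m_i,0)\in M_i\) and \((u_i,y)\in U_i\). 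Because \(M_i\subseteq H_{\infty,i}\), both \(U_i\)-summands carry the same \(y\). Hence \((u_1,u_2,y)\in U\) and \((m_1,m_2,0)\in M\).

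Now take \(M_1=T_1\), so that \(T_1+U_1=U_1\), and \(M_2=Z_l\). The identity gives \(X+U=\pi_1^{-1}(U_1)\cap\pi_2^{-1}(Z_l+U_2)\), which lies in \(L(\B)\) by modularity of \(Z_l\). Taking \(M_2=H_{\infty,2}\) instead recovers your first segment, so no case split on whether \(U\subseteq H_\infty\) is needed. Your closing remark about the shared \(y\) points at exactly this mechanism, but your stated claim for the second segment does not implement it.
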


\begin{proof}
	Write \(\widetilde V_i=V_i\oplus\K y_i\) and
	\(\widetilde V=V_1\oplus V_2\oplus\K y\). Consider the projections
	\[
	\rho_i:\widetilde V\longrightarrow\widetilde V_i,
	\qquad
	\rho_i(x_1,x_2,y)=(x_i,y).
	\]
	The hyperplanes of \(\B\) are the pullbacks of the hyperplanes of
	\(c\A_1\) and \(c\A_2\), with
	\[
	\rho_1^{-1}(H_{\infty,1})
	=
	\rho_2^{-1}(H_{\infty,2})
	=
	H_\infty.
	\]
	Consequently, every \(Y\in L(\B)\) can be written, not necessarily uniquely,
as
	\[
	Y=\rho_1^{-1}(Y_1)\cap\rho_2^{-1}(Y_2)
	\qquad
	(Y_i\in L(c\A_i)).
	\]
	Put \(T_i=\bigcap_{H\in c\A_i}H\) and
	\(F_1=\rho_1^{-1}(T_1)\). Since \(T_1\subseteq H_{\infty,1}\), the
	flat \(F_1\) lies in \(H_\infty\) and is contained in every hyperplane
	pulled back from \(c\A_1\). If
	\(K\in c\A_2\setminus\{H_{\infty,2}\}\), then its equation has the form
	\(\alpha(x_2)-a y_2=0\) with \(\alpha\ne0\). The \(x_2\)-coordinate is
	free on \(F_1\), so \(\rho_2^{-1}(K)\) does not contain \(F_1\). Hence
	\[
	\B_{F_1}
	=
	\{\rho_1^{-1}(H)\mid H\in c\A_1\},
	\qquad
	L(\B_{F_1})\cong L(c\A_1).
	\]
	If \(\B\) is supersolvable, then so is the interval
	\(L(\B_{F_1})\) \cite[Lecture~4, Exercise~19]{Stanley2007}; thus
	\(c\A_1\) is supersolvable. Interchanging the
	factors proves \ref{cone-product:reflection}.

	For \ref{cone-product:gluing}, first suppose that
	\(M_i\in L(c\A_i)\) is modular and \(M_i\subseteq H_{\infty,i}\) for
	\(i=1,2\). Set
	\[
	M=\rho_1^{-1}(M_1)\cap\rho_2^{-1}(M_2).
	\]
	For \(Y=\rho_1^{-1}(Y_1)\cap\rho_2^{-1}(Y_2)\in L(\B)\), the fact that
	all vectors in each \(M_i\) have last coordinate zero gives
	\[
	M+Y
	=
	\rho_1^{-1}(M_1+Y_1)
	\cap
	\rho_2^{-1}(M_2+Y_2).
	\]
	Since the \(M_i\) are modular, both sums on the right belong to the
	corresponding intersection lattices. Therefore \(M+Y\in L(\B)\), and
	\(M\) is modular.
	Choose maximal modular chains
	\[
	\widetilde V_i=X_0^i<X_1^i=H_{\infty,i}
	<X_2^i<\cdots<X_{r_i}^i=T_i
	\]
in \(L(c\A_i)\). Since \(X_j^1\subseteq H_{\infty,1}\) for
\(j\ge1\), the first-line terms below have the form
\(\rho_1^{-1}(X_j^1)\cap\rho_2^{-1}(H_{\infty,2})\), so the preceding
observation applies to them as well. The chain
	\[
	\begin{aligned}
	\widetilde V
	&<\rho_1^{-1}(X_1^1)<\rho_1^{-1}(X_2^1)<\cdots
	 <\rho_1^{-1}(X_{r_1}^1)\\
	&<\rho_1^{-1}(X_{r_1}^1)\cap\rho_2^{-1}(X_2^2)<\cdots
	 <\rho_1^{-1}(X_{r_1}^1)\cap\rho_2^{-1}(X_{r_2}^2)
	\end{aligned}
	\]
	is modular by the preceding observation. Its first nontrivial element is
	\(H_\infty\), and its length is
	\(r_1+r_2-1=r(\B)\). It is therefore a maximal modular chain through
	\(H_\infty\).
\end{proof}

\section{Nice Partitions and Admissible Orderings}

This section proves the equivalence between nice partitions of
\(\A(G_{\Sgain})\) and admissible orderings of \(G_{\Sgain}\) for blocks.

\subsection{Edge partitions induced by nice partitions}

\begin{lemma}\label{lem:parallel-class-one-part}
	Let \(\pi\) be a nice partition of \(\A(G_{\Sgain})\). For every edge
	\(ij\in E(G)\), all hyperplanes in \(\A_{ij}\) belong to the same part
	of \(\pi\).
\end{lemma}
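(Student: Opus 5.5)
The argument uses only independence of \(\pi\); the singleton condition is not needed. Two distinct hyperplanes in the same parallel class never meet, so they cannot lie in a common section. We argue by contradiction. Suppose \(H_{ij}^{a}\) and \(H_{ij}^{b}\), with \(a\ne b\), lie in different parts \(\pi_s\ne\pi_t\). Then \(\{H_{ij}^{a},H_{ij}^{b}\}\) is a \(2\)-section of \(\pi\), since its two members come from different parts. Independence of \(\pi\) requires \(H_{ij}^{a}\cap H_{ij}^{b}\) to be nonempty of codimension \(2\). But the hyperplanes are defined by \(x_i-x_j=a\) and \(x_i-x_j=b\), so
\[
H_{ij}^{a}\cap H_{ij}^{b}=\emptyset .
\]
This contradicts independence. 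Hence all members of \(\A_{ij}\) lie in one part.

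The one point to check is that the paper's notion of section does not require the section to meet every part, or to have a prescribed size. According to the definition above, a \(p\)-section is any set of hyperplanes of \(\A\) with no two in the same part. So a \(2\)-section drawn from just two parts is legitimate, and independence of \(\pi\) applies to it.

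If the notion of section were instead read as needing one hyperplane from each part, the argument still goes through. We would complete \(\{H_{ij}^{a},H_{ij}^{b}\}\) by choosing any hyperplane from each remaining part. The resulting full section still contains two disjoint parallel hyperplanes, so its intersection is empty, and it is not independent. In both readings the contradiction is immediate, so I expect no genuine obstacle.
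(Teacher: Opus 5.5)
Your proof is correct and matches the paper's argument exactly: if two hyperplanes \(H_{ij}^{a}\) and \(H_{ij}^{b}\) with \(a\ne b\) lay in different parts, they would form a \(2\)-section with empty intersection, contradicting independence. Your extra remark about the alternative reading of ``section'' is harmless but not needed under the paper's definition.
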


\begin{proof}
	If \(H_{ij}^{a}\) and \(H_{ij}^{b}\), with \(a\ne b\), belong to
	different parts of \(\pi\), then they would form a 2-section which is
	not independent, since
	\(H_{ij}^{a}\cap H_{ij}^{b}=\emptyset\).
\end{proof}

Accordingly, let \(\Pi:E(G)\to \pi\) send \(ij\) to the part containing
\(\A_{ij}\).

\begin{lemma}\label{lem:induced-edge-map}
	Let \(\pi\) be a nice partition of \(\A(G_{\Sgain})\), and let
	\(\Pi:E(G)\to\pi\) be the induced map on edges. Then \(\Pi\) has the
	following properties.
	\begin{enumerate}[label=(\roman*)]
		\item\label{edge-map:disjoint}
		If \(ij\) and \(kl\) are disjoint edges, then
		\(\Pi(ij)\ne \Pi(kl)\).

		\item\label{edge-map:triangle}
		If \(ik\) and \(jk\) are edges with \(\Pi(ik)=\Pi(jk)\), then
		\(ij\in E(G)\) and \(\Pi(ij)\ne \Pi(ik)\).
	\end{enumerate}
\end{lemma}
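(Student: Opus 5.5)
Both assertions follow from the independence half of niceness, applied to sections made of hyperplanes from the parallel classes \(\A_{ij}\), which lie in single parts by Lemma~\ref{lem:parallel-class-one-part}. For \ref{edge-map:disjoint}, and for the inequality in \ref{edge-map:triangle}, I will show that two hyperplanes in the same part always have a nonempty intersection. The membership \(ij\in E(G)\) in \ref{edge-map:triangle} will use the singleton condition at a codimension-two flat.

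First I establish a basic observation. Let \(H,H'\) be distinct hyperplanes in the same part \(\pi_m\) whose intersection is nonempty of codimension two. Then \(X=H\cap H'\in L(\A)\) is a nontrivial flat. Its localization \(\A_X\) meets \(\pi_m\) in at least the two hyperplanes \(H,H'\). So some other part \(\pi_l\) satisfies \(|\pi_l\cap\A_X|=1\). This is not yet a contradiction, so disjointness alone will not suffice; I will instead build a section that fails to be independent.

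For \ref{edge-map:disjoint}, suppose \(\Pi(ij)=\Pi(kl)=\pi_m\) with \(\{i,j\}\cap\{k,l\}=\emptyset\). Take \(H=H_{ij}^a\) and \(H'=H_{kl}^b\); they meet in a codimension-two flat \(X\). My plan is to use the singleton condition at \(X\), but I see that \(\A_X\) may consist only of these two hyperplanes, since nothing else need contain \(X\). In that case the restricted partition \(\{\{H,H'\}\}\) has no singleton part, which contradicts niceness directly. To make this rigorous I will check that no other hyperplane \(H_{pq}^c\) contains \(X\). On \(X\) the coordinates are constrained only by \(x_i-x_j=a\) and \(x_k-x_l=b\), so a linear form \(x_p-x_q\) is constant on \(X\) only when \(\{p,q\}=\{i,j\}\) or \(\{p,q\}=\{k,l\}\), by disjointness. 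Those other hyperplanes are parallel to \(H\) or \(H'\) and are disjoint from \(X\). Hence \(\A_X=\{H,H'\}\) lies inside one part, and the singleton condition fails. This settles \ref{edge-map:disjoint}.

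For \ref{edge-map:triangle}, I apply the same argument to \(H=H_{ik}^a\) and \(H'=H_{jk}^b\), both in \(\pi_m\), with \(X=H\cap H'\) of codimension two. Now \(X\) also determines \(x_i-x_j=a-b\). So \(\A_X\) consists of \(H\), \(H'\), and \(H_{ij}^{a-b}\) if \(ij\in E(G)\) and \(a-b\in S_{ij}\). If \(ij\notin E(G)\), then \(\A_X=\{H,H'\}\) sits in one part, a contradiction; hence \(ij\in E(G)\). If \(\Pi(ij)=\pi_m\), then either \(\A_X=\{H,H'\}\) or \(\A_X=\{H,H',H_{ij}^{a-b}\}\), and in either case it lies in a single part of size at least two, again contradicting the singleton condition. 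Therefore \(\Pi(ij)\ne\pi_m\). The only delicate step is the careful identification of \(\A_X\), namely which differences \(x_p-x_q\) are constant on \(X\). It is routine: the flat is cut out by two independent edge equations, so the constant differences are exactly those along edges of the spanned connected vertex set.
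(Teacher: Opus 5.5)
Your argument is correct and is essentially the paper's proof. In both parts you localize at \(X=H\cap H'\) for two hyperplanes of the common part, determine \(\A_X\) by checking which differences \(x_p-x_q\) are constant on \(X\), and get a contradiction with the singleton condition; in (ii) this forces \(H_{ij}^{a-b}\) to exist and to lie in a different part.

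Delete your opening claims that the proof relies on independence and that two hyperplanes in the same part always intersect. That assertion is false: two parallel hyperplanes of one class \(\A_{ij}\) share a part but do not meet. It is also never used, since your proof runs entirely on the singleton condition.
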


\begin{proof}
	For \ref{edge-map:disjoint}, suppose that \(ij\) and \(kl\) are
	disjoint and lie in the same part. Choose \(a\in S_{ij}\) and
	\(b\in S_{kl}\), and set \(X=H_{ij}^{a}\cap H_{kl}^{b}\). The two
	equations fix only the differences attached to the two chosen
	edges. Since the two pairs of variables are disjoint, no difference
	attached to any other edge is constant on \(X\). Hence no other
	hyperplane of \(\A(G_{\Sgain})\) contains \(X\), and
	\(\A_X=\{H_{ij}^{a},H_{kl}^{b}\}\). Since these two hyperplanes lie in
	the same part of \(\pi\), the localization \(\A_X\) has no singleton
	part, a contradiction.

	For \ref{edge-map:triangle}, choose \(a\in S_{ik}\) and
	\(b\in S_{jk}\), and put \(X=H_{ik}^{a}\cap H_{jk}^{b}\). On \(X\),
	the only additional forced difference is
	\(x_i-x_j=a-b\). Thus the only possible third hyperplane of
	\(\A(G_{\Sgain})\) containing \(X\) is \(H_{ij}^{a-b}\).
	Since \(H_{ik}^{a}\) and \(H_{jk}^{b}\) lie in the same part, the
	singleton condition forces this third hyperplane to exist and to lie in a
	different part. Thus \(ij\in E(G)\) and
	\(\Pi(ij)\ne \Pi(ik)\).
\end{proof}

\begin{lemma}\label{lem:triangle-two-parts}
	Let \(\pi\) be a nice partition of \(\A(G_{\Sgain})\), and let
	\(\Pi:E(G)\to\pi\) be the induced map on edges. For every triangle
	\(i,j,k\) in \(G\), the three edges are assigned exactly two parts:
	\[
	|\{\Pi(ij),\Pi(jk),\Pi(ik)\}|=2.
	\]
\end{lemma}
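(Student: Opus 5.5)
We must rule out two cases: all three edges in one part, and all three edges in three distinct parts.

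\emph{Not all equal.} If \(\Pi(ij)=\Pi(jk)=\Pi(ik)\), then in particular \(\Pi(ik)=\Pi(jk)\), and Lemma~\ref{lem:induced-edge-map}\ref{edge-map:triangle} gives \(\Pi(ij)\ne\Pi(ik)\), a contradiction. So at most two of the three values coincide.

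\emph{Not all distinct.} Suppose the three parts are pairwise different. Pick \(a\in S_{ik}\) and \(b\in S_{jk}\); since all of \(\A_{ij}\) lies in a single part \(\Pi(ij)\) by Lemma~\ref{lem:parallel-class-one-part}, we may also pick any \(c\in S_{ij}\). The three hyperplanes \(H_{ik}^{a}\), \(H_{jk}^{b}\), \(H_{ij}^{c}\) lie in three distinct parts, so they form a \(3\)-section of \(\pi\). Independence would require \(H_{ik}^{a}\cap H_{jk}^{b}\cap H_{ij}^{c}\) to be nonempty of codimension \(3\). But the linear forms \(x_i-x_k\), \(x_j-x_k\), \(x_i-x_j\) are linearly dependent: the first minus the second equals the third. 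Hence the intersection is either empty (if \(c\ne a-b\)) or of codimension \(2\) (if \(c=a-b\)). In either case the section is not independent, contradicting that \(\pi\) is nice.

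Combining the two cases, exactly two distinct parts occur among \(\Pi(ij),\Pi(jk),\Pi(ik)\). The only point needing care is the second case: the dependence argument must work for every choice of gains, and it does, because the rank deficiency comes from the linear parts of the forms alone, independent of the constants \(a,b,c\).
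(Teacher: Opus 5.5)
Your proposal is correct and follows essentially the same approach as the paper. You rule out a single part via Lemma~\ref{lem:induced-edge-map}\ref{edge-map:triangle}, and you rule out three distinct parts by observing that the 3-section has linearly dependent linear forms, so its intersection is either empty or of codimension at most two.
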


\begin{proof}
	By Lemma~\ref{lem:induced-edge-map}\ref{edge-map:triangle}, the three
	edges cannot all be assigned to the same part. If they had three
	distinct images, one hyperplane from each parallel class would give a
	3-section. But the corresponding linear forms satisfy
	\((x_i-x_j)+(x_j-x_k)=x_i-x_k\), so the three hyperplanes either have
	empty intersection or have
	intersection of codimension at most two. Thus this 3-section is not
	independent, a contradiction.
\end{proof}

\begin{lemma}\label{lem:nice-implies-chordal}
	If \(\A(G_{\Sgain})\) admits a nice partition, then \(G\) is chordal.
\end{lemma}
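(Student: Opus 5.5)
Suppose \(G\) is not chordal. Then \(G\) contains an induced cycle \(C=u_1u_2\cdots u_m u_1\) with \(m\ge4\). We will derive a contradiction from the edge map \(\Pi\) of Lemma~\ref{lem:induced-edge-map}, using only the properties already proved for it. The strategy is to show that consecutive edges of \(C\) cannot share a part, and then to find a pair of disjoint edges of \(C\) that must share a part. This contradicts Lemma~\ref{lem:induced-edge-map}\ref{edge-map:disjoint}.

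First, the consecutive edges \(u_{t-1}u_t\) and \(u_tu_{t+1}\) (indices mod \(m\)) meet at \(u_t\). If they had the same image under \(\Pi\), Lemma~\ref{lem:induced-edge-map}\ref{edge-map:triangle} would force \(u_{t-1}u_{t+1}\in E(G)\). For \(m\ge4\) this is a chord of \(C\), contradicting that \(C\) is induced. Hence consecutive edges of \(C\) lie in distinct parts. Since disjoint edges also lie in distinct parts by Lemma~\ref{lem:induced-edge-map}\ref{edge-map:disjoint}, all \(m\) edges of \(C\) are assigned pairwise distinct parts.

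Second, pick one hyperplane \(H_{u_tu_{t+1}}^{a_t}\) with \(a_t\in S_{u_tu_{t+1}}\) for each \(t\). By the previous step these \(m\) hyperplanes form an \(m\)-section of \(\pi\). Their linear forms \(x_{u_t}-x_{u_{t+1}}\) sum to zero, so they are linearly dependent. Consequently the intersection is either empty or has codimension at most \(m-1\). Either way the section is not independent, which contradicts independence of \(\pi\). This generalizes the argument of Lemma~\ref{lem:triangle-two-parts} from triangles to cycles, and it already finishes the proof without needing any disjoint-edge pairing.

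The only delicate point is the first step. It relies on Lemma~\ref{lem:induced-edge-map}\ref{edge-map:triangle} applying to edges of \(C\) viewed inside the whole graph \(G\), so we must use the hypothesis that \(C\) is an induced cycle rather than an arbitrary cycle to exclude the chord. With that checked, the dependency of the cycle's linear forms closes the argument, so I expect no serious obstacle.
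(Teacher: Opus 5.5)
Your argument is correct and is essentially the paper's proof. Lemma~\ref{lem:induced-edge-map} puts all edges of an induced cycle of length \(m\ge4\) into pairwise distinct parts, and then choosing one hyperplane per edge gives an \(m\)-section whose linear forms sum to zero, which contradicts independence. The only fix is expository: the strategy announced in your first paragraph (finding disjoint edges forced into a common part) is never used, so delete it.
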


\begin{proof}
	Let \(\pi\) be a nice partition of \(\A(G_{\Sgain})\), and let
	\(\Pi:E(G)\to\pi\) be the induced map on edges.
	Suppose that \(G\) has a chordless cycle
	\(C=(v_1,v_2,\ldots,v_m,v_1)\)
	of length \(m\ge 4\). No two edges of \(C\) can be assigned to the same
	part. For nonadjacent edges this follows from Lemma~\ref{lem:induced-edge-map}\ref{edge-map:disjoint}; for adjacent edges it
	follows from Lemma~\ref{lem:induced-edge-map}\ref{edge-map:triangle},
	since equality of the assigned parts would force a chord.
	Choose one hyperplane from each parallel class on \(C\). These
	hyperplanes belong to distinct parts of \(\pi\), hence form an
	\(m\)-section.
	Since \(\pi\) is independent, their intersection should be nonempty of
	codimension \(m\). But the defining linear forms around a cycle are
	linearly dependent; hence the chosen hyperplanes either have empty
	intersection or have intersection of codimension at most \(m-1\). This
	contradicts independence.
\end{proof}

A star is a nonempty set of edges incident with one common vertex, called
its center.

\begin{proposition}\label{prop:star-preimages}
	Let \(\pi\) be a nice partition of \(\A(G_{\Sgain})\), and let
	\(\Pi:E(G)\to\pi\) be the induced map on edges. Assume that \(G\) is a
	block. For every part \(\pi_r\), if
	\(\Pi^{-1}(\pi_r)\) is nonempty, then it is a star. Moreover, the
	nonempty preimages have distinct centers.
\end{proposition}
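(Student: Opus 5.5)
The proof has two parts: first show that each nonempty preimage is a star, then show that the centers can be chosen distinct. Both use only Lemma~\ref{lem:induced-edge-map} and the independence argument from Lemma~\ref{lem:nice-implies-chordal}; the block hypothesis enters only in the second part.

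\emph{Each preimage is a star.} Fix a part \(\pi_r\) and put \(F=\Pi^{-1}(\pi_r)\ne\emptyset\). By Lemma~\ref{lem:induced-edge-map}\ref{edge-map:disjoint}, any two edges of \(F\) share a vertex. A pairwise intersecting family of edges of a simple graph is either a star or the edge set of a triangle. The triangle case is excluded by Lemma~\ref{lem:induced-edge-map}\ref{edge-map:triangle}: if \(ik,jk\in F\), then \(\Pi(ij)\ne\pi_r\), so \(ij\notin F\). Hence \(F\) is a star. When \(|F|\ge2\) its center is determined. When \(|F|=1\) either endpoint may serve as center, and the choice is postponed to the second step.

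\emph{Distinct centers.} Here I would work with transversals. Choosing one hyperplane from each part with nonempty preimage gives a section. By independence, the corresponding edges have linearly independent forms \(x_i-x_j\), so they form a forest, exactly as in the proof of Lemma~\ref{lem:nice-implies-chordal}. The key claim is the following. Suppose two distinct parts have preimages \(F,F'\) that are forced to share a center \(v\). This means either both have at least two edges, or the remaining freedom for single-edge parts cannot separate them. I would then derive a cycle all of whose edges lie in pairwise distinct parts, contradicting independence.

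To build that cycle, take \(vu\in F\) and \(vx\in F'\). Because \(G\) is a block, \(v\) is not a cut vertex, so there is a \(u\)--\(x\) path avoiding \(v\); together with \(vu\) and \(vx\) it closes a cycle through \(v\). Since \(G\) is chordal by Lemma~\ref{lem:nice-implies-chordal}, I would shorten this cycle via chords to triangles. On each triangle, Lemma~\ref{lem:triangle-two-parts} and Lemma~\ref{lem:induced-edge-map}\ref{edge-map:triangle} force a specific two-part pattern. In the pattern, the two equally coloured edges meet at the star center, and the third edge has a different colour. I would propagate these forced colours from the fan at \(v\) across the triangulation. The aim is to show that the edges \(vu\), \(vx\) and a suitable path of differently coloured edges yield a section whose forms are dependent. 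Such a section is not independent, which gives the contradiction.

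Finally, for single-edge parts I would choose centers by a Hall-type matching. Any \(k\) such parts, together with the forced centers, must admit \(k\) distinct available endpoints. Otherwise a transversal would place too many edges on too few vertices inside a forest.

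The main obstacle is this propagation and matching step. Local information is insufficient: two stars sharing a center \(v\) cause no violation in the localization at \(H_{vu}^{a}\cap H_{vx}^{b}\) when \(ux\notin E(G)\). The contradiction therefore has to be global and must genuinely use 2-connectedness, since for a path \(u\)--\(v\)--\(x\) both parts can have center \(v\). My first attempt would be the cycle-through-\(v\) argument with chordal triangulation. If that stalls, I would fall back on counting: compare the number of parts with the rank \(n-1\), presumably via the factorization property of nice partitions, and use that a full transversal is then a spanning tree, so that the centers give an injective vertex labelling.
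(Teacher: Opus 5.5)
Your first step is correct and is essentially the paper's argument. Pairwise-intersecting edges form a star or a triangle, and the triangle is excluded by Lemma~\ref{lem:induced-edge-map}\ref{edge-map:triangle}; the paper phrases this via Lemma~\ref{lem:triangle-two-parts}.

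The gap is the second assertion. You never show that two parts cannot share a center. You outline a plan (shorten a cycle through \(v\) by chords, ``propagate forced colours'', aim for a dependent section) plus fallbacks, and you flag this step yourself as the obstacle. The missing idea is the right choice of path.

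\begin{itemize}
\item[(a)] Let \(E_1,E_2\) be two preimages contained in the star at \(v\). Let \(A,B\) be their sets of outer endpoints; they are disjoint, since each edge lies in one part.
\item[(b)] Take a \emph{shortest} path \(u_0,\ldots,u_k\) in \(G-v\) from \(A\) to \(B\). It exists because \(G\) is a block.
\item[(c)] Minimality makes this path induced, with interior outside \(A\cup B\). Chordality (Lemma~\ref{lem:nice-implies-chordal}) then forces \(vu_1,\ldots,vu_{k-1}\in E(G)\). This gives triangles \(v,u_t,u_{t+1}\) for \(0\le t<k\).
\item[(d)] Applying Lemma~\ref{lem:triangle-two-parts} along this fan yields \(\Pi(u_{t-1}u_t)=\Pi(vu_t)\) inductively. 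For \(t=1\) this holds because neither \(vu_1\) nor \(u_0u_1\) lies in \(E_1\).
\item[(e)] For the inductive step, \(\Pi(u_tu_{t+1})=\Pi(u_{t-1}u_t)\) would force the chord \(u_{t-1}u_{t+1}\) by Lemma~\ref{lem:induced-edge-map}\ref{edge-map:triangle}. Also \(\Pi(vu_{t+1})=\Pi(u_{t-1}u_t)\) is excluded by Lemma~\ref{lem:induced-edge-map}\ref{edge-map:disjoint}, since these edges are disjoint.
\item[(f)] At \(t=k\), since \(vu_k\in E_2\), this puts \(u_{k-1}u_k\) into \(E_2\), although it is not incident with \(v\). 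That is the contradiction, and it already covers \(k=1\).
\end{itemize}

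Three further remarks.

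\begin{itemize}
\item[1.] The contradiction is not the dependent section you are aiming for. On the cycle \(v,u_0,\ldots,u_k,v\) the last two edges are forced into the same part, so this cycle gives no section with pairwise distinct parts.
\item[2.] The argument uses only that both preimages lie in the star at \(v\), so it covers single-edge preimages too. Hence the sets of possible centers are pairwise disjoint, any choice of centers works, and your Hall-type matching is unnecessary. Its forest-counting justification would not go through anyway: the other endpoints of edges taken from parts with forced centers need not lie among the counted vertices.
\item[3.] The counting fallback cannot succeed on its own. Take two triangles \(v,a,b\) and \(v,c,d\) glued at \(v\), with all gains \(\{0\}\). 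The partition with edge classes \(\{va,vb\}\), \(\{ab\}\), \(\{vc,vd\}\), \(\{cd\}\) is nice and has as many parts as the rank, and every full section is a spanning tree. Yet two parts are forced to have center \(v\). The block hypothesis has to enter through a path argument like the one above.
\end{itemize}
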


\begin{proof}
	We first show that each nonempty preimage is a star. Fix a part
	\(\pi_r\), and let
	\[
	E_r=\{ij\in E(G)\mid \Pi(ij)=\pi_r\}.
	\]
	By Lemma~\ref{lem:induced-edge-map}\ref{edge-map:disjoint}, any two
	edges in \(E_r\) meet. If the edges of \(E_r\) had no common vertex, then
	three of them would form the edge set of a triangle. This is impossible
	by Lemma~\ref{lem:triangle-two-parts}, because all three edges lie in
	the same part. Thus all edges in
	\(E_r\) are incident with a common vertex; choose one such vertex and
	denote it by \(c_r\).

	It remains to show that distinct nonempty preimages have distinct
	centers. Suppose, to the contrary, that two preimages \(E_1\) and \(E_2\)
	have the same center \(v\). Let \(A=\{u\mid vu\in E_1\}\) and
	\(B=\{u\mid vu\in E_2\}\).
	By Lemma~\ref{lem:nice-implies-chordal}, \(G\) is chordal.
	Since \(G\) is a block, \(G-v\) is connected. Choose a shortest path
	\(P=(u_0,u_1,\ldots,u_k)\) in \(G-v\) from \(A\) to \(B\),
	where \(u_0\in A\) and \(u_k\in B\). By the minimality of \(P\), the
	internal vertices \(u_1,\ldots,u_{k-1}\) lie in neither \(A\) nor \(B\).
	First, \(vu_i\in E(G)\) for \(i=1,\ldots,k-1\). Indeed,
	the cycle
	\((v,u_0,u_1,\ldots,u_k,v)\)
	has length at least four unless \(k=1\). Since \(G\) is chordal and
	\(P\) is shortest in \(G-v\), any chord of this cycle must be of the
	form \(vu_i\). If the first such chord were \(vu_i\) with \(i>1\), then,
	because \(P\) is shortest in \(G-v\), the shorter cycle
	\((v,u_0,u_1,\ldots,u_i,v)\) would have no chord, contradicting
	chordality. Hence \(vu_1\in E(G)\). Repeating the same
	argument for the cycle \((v,u_1,u_2,\ldots,u_k,v)\), and then
	inducting along the path, gives \(vu_i\in E(G)\) for every internal
	vertex of \(P\).

	If \(k=1\), then \(v,u_0,u_1\) form a triangle with
	\(\Pi(vu_0)=\pi_1\) and \(\Pi(vu_1)=\pi_2\). By Lemma~\ref{lem:triangle-two-parts}, the third edge \(u_0u_1\) has image
	either \(\pi_1\) or \(\pi_2\). This is impossible, because every edge in
	either \(E_1\) or \(E_2\) is incident with \(v\), whereas \(u_0u_1\) is
	not.
	Now assume \(k\ge2\). In the triangle \(v,u_0,u_1\), the edge \(vu_0\)
	has image \(\pi_1\). The other two edges do not have image \(\pi_1\):
	\(vu_1\notin E_1\) because \(u_1\notin A\), and
	\(u_0u_1\notin E_1\) because it is not incident with \(v\). Lemma~\ref{lem:triangle-two-parts} therefore gives
	\(\Pi(u_0u_1)=\Pi(vu_1)\).
	Propagate this equality along the path. Suppose that, for some
	\(1\le t\le k-1\), \(\Pi(u_{t-1}u_t)=\Pi(vu_t)\). Consider the
	triangle \(v,u_t,u_{t+1}\). If
	\(\Pi(u_tu_{t+1})=\Pi(u_{t-1}u_t)\), then Lemma~\ref{lem:induced-edge-map}\ref{edge-map:triangle} would force the chord
	\(u_{t-1}u_{t+1}\), contradicting the minimality of \(P\). Also
	\(\Pi(vu_{t+1})\ne\Pi(u_{t-1}u_t)\), because the two edges
	\(vu_{t+1}\) and \(u_{t-1}u_t\) are disjoint, so Lemma~\ref{lem:induced-edge-map}\ref{edge-map:disjoint} applies. Hence, by
	Lemma~\ref{lem:triangle-two-parts},
	\(\Pi(u_tu_{t+1})=\Pi(vu_{t+1})\). By induction,
	\(\Pi(u_{k-1}u_k)=\Pi(vu_k)=\pi_2\).
	This is again impossible, because every edge in \(E_2\) is incident
	with \(v\), whereas \(u_{k-1}u_k\) is not.
\end{proof}

\subsection{From induced edge partitions to admissible orderings}

\begin{proposition}\label{prop:nice-star-order}
	Let \(\pi\) be a nice partition of \(\A(G_{\Sgain})\), and let
	\(\Pi:E(G)\to\pi\) be the induced map on edges. Assume that \(G\) is a
	block. After relabeling the parts, there is an ordering
	\(v_1,\ldots,v_n\)
	of the vertices such that
	\[
	\pi_i=\bigcup_{j>i}\A_{v_iv_j}
	\]
	for \(i=1,\ldots,n-1\), with empty unions omitted.
\end{proposition}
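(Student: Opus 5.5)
Following the paper's outline, I would use the star decomposition of Proposition~\ref{prop:star-preimages} to orient every edge away from the center of its star, show via independence that this orientation is acyclic, and read the vertex ordering off a topological sort. Concretely, every nonempty preimage \(E_r=\Pi^{-1}(\pi_r)\) is a star with a center \(c_r\), and distinct nonempty preimages have distinct centers. When \(E_r\) is a single edge, its center is ambiguous, and I would have to fix one endpoint; I return to this below. Orient each edge \(uw\in E_r\) as \(c_r\to w\). Every edge then gets exactly one orientation, since \(\Pi\) is a function on \(E(G)\). Note also that parts of \(\pi\) with empty preimage are impossible, because every hyperplane lies in some \(\A_{ij}\). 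So the parts of \(\pi\) correspond bijectively to the centers.

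The key step is acyclicity. Suppose there is a directed cycle \(w_1\to w_2\to\cdots\to w_m\to w_1\) with \(m\ge3\); \(m=2\) cannot occur in a simple graph. Each edge \(w_tw_{t+1}\) lies in the star centered at \(w_t\), and since centers are distinct, the \(m\) edges lie in \(m\) distinct parts. Choosing one hyperplane from each parallel class gives an \(m\)-section. The linear forms \(x_{w_t}-x_{w_{t+1}}\) sum to zero, so, exactly as in Lemma~\ref{lem:nice-implies-chordal}, this section is either empty or has codimension at most \(m-1\). This contradicts independence, so the orientation is acyclic.

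I then take a topological ordering \(v_1,\ldots,v_n\) with all edges directed from smaller to larger index. Every edge \(v_iv_j\) with \(i<j\) is then oriented \(v_i\to v_j\), so it lies in the star centered at \(v_i\). Hence the part whose center is \(v_i\) is exactly \(\bigcup_{j>i}\A_{v_iv_j}\). Relabel that part as \(\pi_i\). Vertices that are not centers have empty union, and \(v_n\) has no later neighbours.

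The main obstacle is the ambiguity of single-edge stars, together with checking that distinct parts really get distinct centers once a center has been chosen for them. Proposition~\ref{prop:star-preimages} asserts that centers \emph{can} be chosen distinct, and I would fix such a choice before orienting. If that choice is only existential, I would instead argue directly. If \(E_r=\{uw\}\), and both \(u\) and \(w\) are centers of other stars, then \(G\) being a block and not a single edge gives a triangle or path structure through \(uw\). Propagating with Lemma~\ref{lem:triangle-two-parts} as in the proof of Proposition~\ref{prop:star-preimages} would then contradict distinctness. Once centers are fixed and distinct, the acyclicity argument above applies verbatim.
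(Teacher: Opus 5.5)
Your proof is correct and follows the paper's proof step for step: fix pairwise distinct centers from Proposition~\ref{prop:star-preimages}, orient each edge away from the center of its star, rule out directed cycles because the cycle's linear forms are dependent while independence of \(\pi\) requires codimension \(m\), and read off each part from a linear extension of the orientation. Your worry about single-edge stars is unnecessary. The paper, like you, simply fixes such a choice of distinct centers. Moreover, the proof of Proposition~\ref{prop:star-preimages} shows that no vertex can be a common vertex of two distinct preimages, so any choice of centers is automatically distinct, and your fallback argument is never needed.
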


\begin{proof}
	By Proposition~\ref{prop:star-preimages}, choose pairwise distinct
	centers for the nonempty preimages.
	Orient every edge away from the chosen vertex of the preimage containing
	it. If there were
	a directed cycle
	\[
	u_1\to u_2\to\cdots\to u_m\to u_1,
	\]
	then the images under \(\Pi\) of its edges would be pairwise distinct,
	because the centers of the corresponding preimages are the distinct
	vertices \(u_1,\ldots,u_m\). Choosing one hyperplane from each
	corresponding parallel class gives an \(m\)-section of \(\pi\). The
	defining linear forms around this cycle are linearly dependent, so this
	section is not independent, a contradiction. Hence the orientation is
	acyclic.
	Take a linear extension \(v_1,\ldots,v_n\) of this acyclic orientation.
	If the chosen vertex of a preimage is \(v_i\), all its edges point from
	\(v_i\) to later vertices. Therefore the corresponding part is precisely
	\(\bigcup_{j>i}\A_{v_iv_j}\).
\end{proof}

\begin{theorem}\label{thm:nice-order}
	Assume that \(G\) is a block. The affine arrangement
	\(\A(G_{\Sgain})\) admits a nice partition if and only if
	\(G_{\Sgain}\) admits an admissible ordering.
\end{theorem}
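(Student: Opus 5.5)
The plan is to prove the two implications separately: the forward direction reads admissibility off the ordering given by Proposition~\ref{prop:nice-star-order}, and the converse builds a partition from an admissible ordering by grouping hyperplanes according to the earlier endpoint of each edge.

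\emph{Nice partition \(\Rightarrow\) admissible ordering.} Let \(\pi\) be a nice partition. Since \(G\) is a block, Proposition~\ref{prop:nice-star-order} gives, after relabeling the parts, an ordering \(v_1,\ldots,v_n\) with \(\pi_k=\bigcup_{j>k}\A_{v_kv_j}\). I claim this ordering is \(\Sgain\)-admissible.

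Fix \(k\) and distinct \(i,j>k\). If \(v_kv_i\) or \(v_kv_j\) is not an edge, the left-hand side \(S_{v_kv_j}-S_{v_kv_i}\) is empty and there is nothing to prove. Otherwise take \(a\in S_{v_kv_j}\) and \(b\in S_{v_kv_i}\), and put \(X=H_{v_kv_j}^{a}\cap H_{v_kv_i}^{b}\). This is nonempty of codimension two.

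On \(X\) we have \(x_{v_k}-x_{v_j}=a\) and \(x_{v_k}-x_{v_i}=b\). Hence \(x_{v_i}-x_{v_j}=a-b\) is the only additional difference fixed on \(X\). So \(\A_X\) consists of the two chosen hyperplanes, together with \(H_{v_iv_j}^{a-b}\) if that hyperplane belongs to the arrangement.

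The two chosen hyperplanes both lie in \(\pi_k\). The singleton condition at \(X\) therefore forces a third hyperplane to exist. This gives \(a-b\in S_{v_iv_j}\), which is exactly the admissibility inclusion. This is the same computation as in Lemma~\ref{lem:induced-edge-map}\ref{edge-map:triangle}; the only care needed is with the signs, using \(S_{ji}=-S_{ij}\).

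\emph{Admissible ordering \(\Rightarrow\) nice partition.} Given an admissible ordering \(v_1,\ldots,v_n\), define \(\pi_k=\bigcup_{j>k}\A_{v_kv_j}\) and drop the empty parts. The block hypothesis is not needed in this direction.

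\emph{Independence.} A section picks at most one hyperplane \(H_{v_kv_j}^{a}\) with \(j>k\) for each \(k\). The underlying edges therefore form a graph in which each vertex is the earlier endpoint of at most one chosen edge.

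This graph has no cycle. On any cycle, the vertex of smallest index would be the earlier endpoint of two cycle edges, which is impossible. So the chosen edges form a forest.

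For a forest, the equations \(x_u-x_w=c\) with arbitrary constants can be solved successively from leaves. Their linear parts are linearly independent. Hence the intersection is nonempty of codimension equal to the number of chosen edges, and the section is independent.

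\emph{Singleton condition.} Let \(X\in L(\A)\setminus\{V\}\), and let \(k\) be the largest index with \(\pi_k\cap\A_X\ne\emptyset\). Suppose \(\pi_k\cap\A_X\) contained two hyperplanes.

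They cannot come from the same parallel class, since two parallel hyperplanes are disjoint and cannot both contain \(X\). So they are \(H_{v_kv_j}^{a}\) and \(H_{v_kv_i}^{b}\) with distinct \(i,j>k\). By admissibility \(a-b\in S_{v_iv_j}\). As computed above, \(H_{v_iv_j}^{a-b}\) contains \(H_{v_kv_j}^{a}\cap H_{v_kv_i}^{b}\), and hence contains \(X\).

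But \(H_{v_iv_j}^{a-b}\) lies in \(\pi_{\min(i,j)}\) with \(\min(i,j)>k\). This contradicts the maximality of \(k\). Therefore \(|\pi_k\cap\A_X|=1\), and \(\pi\) is nice.

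The main obstacle is really the star and acyclicity structure, which is already supplied by Propositions~\ref{prop:star-preimages} and~\ref{prop:nice-star-order}. What remains here is to choose the extremal index correctly: the maximal \(k\), not the minimal one, is what makes the singleton argument close.
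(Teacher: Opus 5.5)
Your proposal is correct and follows the paper's proof essentially step for step. The forward direction reads admissibility off the star ordering of Proposition~\ref{prop:nice-star-order}, using the singleton condition at the intersection of two hyperplanes from the same part \(\pi_k\). The converse uses the same partition \(\pi_k=\bigcup_{j>k}\A_{v_kv_j}\), the same smallest-vertex forest argument for independence, and the same choice of the largest index \(k\) for the singleton condition; your remark that the block hypothesis is not used in the converse also matches the paper's argument.
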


\begin{proof}
	Suppose first that \(\A(G_{\Sgain})\) has a nice partition \(\pi\). By
	Proposition~\ref{prop:nice-star-order}, after relabeling the vertices
	we may write
	\[
	\pi_k=\bigcup_{i>k}\A_{v_kv_i}.
	\]
	Let \(i,j>k\) be distinct. If either gain set on the left-hand side of
	the admissibility inclusion is empty, there is nothing to prove.
	Otherwise, choose \(a\in S_{v_kv_i}\) and \(b\in S_{v_kv_j}\).
	The two hyperplanes \(H_{v_kv_i}^{a}\) and \(H_{v_kv_j}^{b}\) lie in
	the same part \(\pi_k\). Their intersection is contained in
	the affine hyperplane \(x_{v_i}-x_{v_j}=b-a\). If
	\(H_{v_iv_j}^{b-a}\) did not belong to the arrangement, the localization
	of the intersection would contain exactly the two chosen hyperplanes,
	both in the same part, and would have no singleton part. Hence
	\(b-a\in S_{v_iv_j}\).
	Thus the ordering is admissible.

	Conversely, assume \(v_1,\ldots,v_n\) is admissible. Define
	\(\pi_k\), for \(k=1,\ldots,n-1\), by
	\[
	\pi_k=\bigcup_{i>k}\A_{v_kv_i}
	\]
	and let \(\pi\) be the family of nonempty \(\pi_k\)'s.
	To prove independence, take a section of \(\pi\). The selected edges form
	a forest: every selected edge is oriented from the vertex indexing its
	part to a later vertex, and an undirected cycle would have a smallest
	vertex incident with two selected outgoing edges from the same part. Hence
	the defining linear forms are independent. Moreover, since the selected
	graph is a forest, the selected affine hyperplanes have nonempty
	intersection: on each tree component, choose one root coordinate freely and determine the
	remaining coordinates successively along the edges. Thus every section
	is independent.
	It remains to verify the singleton condition. Let
	\(X\in L(\A(G_{\Sgain}))\), \(X\ne V\), and choose the largest \(k\)
	such that \(\pi_k\cap \A_X\ne\emptyset\). We show that
	\[
	|\pi_k\cap \A_X|=1.
	\]
	If not, then \(X\) is contained in two distinct hyperplanes
	\(H_{v_kv_i}^{a}\) and \(H_{v_kv_j}^{b}\), with \(i,j>k\).
	Here \(i\ne j\), since two distinct parallel hyperplanes have empty
	intersection whereas \(X\ne\emptyset\).
	By admissibility, \(b-a\in S_{v_iv_j}\), so
	\(H_{v_iv_j}^{b-a}\in \A_X\).
	This hyperplane belongs to \(\pi_{\min\{i,j\}}\), and
	\(\min\{i,j\}>k\), contradicting the maximality of \(k\). Therefore the
	singleton condition holds.
\end{proof}

\section{Supersolvability of the Cone}

This section proves the implications in Theorem~\ref{thm:main} that involve
\(c\A(G_{\Sgain})\) and chains through \(H_\infty\). For a block, an
admissible ordering gives a maximal modular chain through \(H_\infty\), and
such a chain induces a nice partition of the affine arrangement. We also
prove that if \(c\A(G_{\Sgain})\) is
supersolvable, then some maximal modular chain can be chosen to pass through
\(H_\infty\).

The equivalence between supersolvability of the cone and admissibility of the
gain graph also follows from Zaslavsky's characterization of supersolvable graphic-lift lattices
\cite{Zaslavsky2001}. The proof below works directly in the intersection lattice of
\(c\A(G_{\Sgain})\); in particular, it shows how the modular chain can be
chosen to pass through \(H_\infty\) and how this choice descends to a nice
partition of the affine arrangement.

\subsection{Modular coatoms and chains}

\begin{lemma}\label{lem:coatom-criterion}
	Let \(\A\) be a central arrangement of rank \(r\), and let
	\(X\in L(\A)\) have rank \(r-1\). Then \(X\) is modular if and only if
	for every pair of distinct hyperplanes \(H_1,H_2\notin \A_X\), there exists
	\(H'\in\A_X\) such that
	\[
	H_1\cap H_2\subseteq H'.
	\]
\end{lemma}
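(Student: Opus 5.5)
We use the characterization quoted in the preliminaries: \(X\) is modular if and only if \(X+Y\in L(\A)\) for every \(Y\in L(\A)\) \cite[Corollary~2.26]{OrlikTerao1992}. Equivalently, \(r(X)+r(Y)=r(X\wedge Y)+r(X\vee Y)\) for all \(Y\). Since \(X\) is a coatom, \(X\vee Y\in\{X,\hat 1\}\) for every \(Y\). If \(Y\le X\), the rank identity holds trivially, so only flats \(Y\not\le X\) matter; for these \(X\vee Y=\hat1\). Throughout we translate so that \(\A\) is central at the origin.

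\emph{Necessity.} Suppose \(X\) is modular, and let \(H_1\ne H_2\) be hyperplanes not in \(\A_X\). Put \(Y=H_1\cap H_2\), which has rank \(2\). Then \(X+Y\) lies in \(L(\A)\) and contains \(X\). We have \(X+Y\ne X\), because \(Y\not\subseteq X\) (otherwise \(X\subseteq H_1\)). Since \(X\) has codimension \(r-1\) and every element of \(L(\A)\) has codimension at most \(r\), modularity gives
\[
r(X)+2=r(X\wedge Y)+r.
\]
Hence \(r(X\wedge Y)=1\). So \(X+Y\) is a hyperplane \(H'\in\A\), and it contains both \(X\) and \(Y=H_1\cap H_2\). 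Thus \(H'\in\A_X\) and \(H_1\cap H_2\subseteq H'\), as required.

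\emph{Sufficiency.} Assume the pair condition holds. We show \(r(X\wedge Y)=r(Y)-1\) for every \(Y\not\le X\); this is exactly the rank identity, since \(X\vee Y=\hat1\). Note that \(r(X\wedge Y)=\dim(X+Y)^{\perp}\), the dimension of the space of linear forms vanishing on both \(X\) and \(Y\).

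The inequality \(\le\) is immediate from submodularity of dimension:
\[
\dim(X+Y)=\dim X+\dim Y-\dim(X\cap Y)\ge \dim Y+1,
\]
because \(\dim X-\dim(X\cap Y)\ge 1\) when \(X\cap Y=\hat1\) and \(X\ne\hat1\). However, the flat \(X\wedge Y\) is the intersection of the hyperplanes of \(\A\) containing \(X+Y\), so it could a priori be strictly larger than the subspace \(X+Y\). We must show that \(\A_X\cap\A_Y\) spans a flat of rank \(r(Y)-1\). This is the main step, and we argue by induction on \(r(Y)\).

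\emph{Base case.} For \(r(Y)=1\) the claim holds trivially.

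\emph{Inductive step.} Write \(Y=Y'\cap H\) with \(Y'\not\le X\), \(r(Y')=r(Y)-1\), and \(H\in\A_Y\setminus\A_X\). Such a choice is possible: since \(Y\not\le X\), some hyperplane of \(\A_Y\) lies outside \(\A_X\). Build a basis of \(\A_Y\) starting from that hyperplane, so that the first \(r(Y)-1\) chosen hyperplanes intersect to a \(Y'\not\le X\). By induction, the forms of \(\A_X\cap\A_{Y'}\) span a space of dimension \(r(Y)-2\). Take \(H_1\in\A_{Y'}\setminus\A_X\), which exists since \(Y'\not\le X\), and set \(H_2=H\). The pair condition then gives \(H'\in\A_X\) with \(H_1\cap H_2\subseteq H'\). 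Its form \(\alpha'\) lies in \(\operatorname{span}(\alpha_1,\alpha_2)\subseteq\operatorname{span}\A_Y\), so \(H'\in\A_Y\) as well.

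It remains to see that \(\alpha'\) is not in the span \(W\) of the forms of \(\A_X\cap\A_{Y'}\). Write \(\alpha'=c_1\alpha_1+c_2\alpha_2\), with \(c_2\ne0\) since \(H'\ne H_1\). If \(\alpha'\in W\), then \(\alpha'\) would vanish on \(Y'\), and so would \(\alpha_1\); hence \(\alpha_2\) would vanish on \(Y'\), contradicting \(r(Y)=r(Y')+1\). Therefore the span of the forms of \(\A_X\cap\A_Y\) has dimension at least \(r(Y)-1\). Combined with the upper bound, this yields \(r(X\wedge Y)=r(Y)-1\).

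The degenerate case \(H_1=H_2\) does not occur: \(H_1\in\A_{Y'}\) while \(H_2=H\notin\A_{Y'}\), since \(H\) cuts the rank down. So \(H_1\) and \(H_2\) are distinct, as the pair condition requires.
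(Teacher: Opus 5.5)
Your necessity argument matches the paper's: with \(Y=H_1\cap H_2\) one has \(X\vee Y=\hat{1}\), so the rank identity makes \(X\wedge Y\) an atom \(H'\in\A_X\) that contains \(Y\). There is one slip. Your parenthetical ``otherwise \(X\subseteq H_1\)'' justifies \(X\nsubseteq Y\), which is what gives \(X\vee Y=\hat{1}\) and is all you use. The claim \(X+Y\ne X\) that it is attached to is false for \(r=2\), since then \(Y\) is the center and lies in \(X\).

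For sufficiency you take a different route from the paper. The paper invokes Stanley's criterion that \(X\) is modular if and only if its complements are pairwise incomparable; the pair condition then shows at once that every complement is an atom. You instead verify \(r(X\wedge Y)=r(Y)-1\) for \(Y\not\le X\) directly, by induction on \(r(Y)\). This avoids the external criterion, but as written the inductive step has a gap.

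You assert that one can always write \(Y=Y'\cap H\) with \(Y'\not\le X\), \(r(Y')=r(Y)-1\) and \(H\in\A_Y\setminus\A_X\). Your basis construction only guarantees \(Y'\not\le X\), because \(Y'\) lies in the first chosen hyperplane. The last hyperplane \(H\) may lie in \(\A_X\), and then the pair condition cannot be applied to \(H_2=H\). In fact such a decomposition need not exist at all. Take the Boolean arrangement \(\{x=0\},\{y=0\},\{z=0\}\) in \(\K^3\), the modular coatom \(X=\{x=y=0\}\), and \(Y=\{x=z=0\}\). The only hyperplane of \(\A_Y\setminus\A_X\) is \(\{z=0\}\), and this forces \(Y'=\{x=0\}\supseteq X\).

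The missing case is easy to supply. If \(H\in\A_X\), then \(H\in\A_X\cap\A_Y\), and its form is not in \(W\): every form in \(W\) vanishes on \(Y'\), while \(H\not\supseteq Y'\). So the span of the forms of \(\A_X\cap\A_Y\) again has dimension at least \(r(Y)-1\). With this case added, your induction is correct. It gives a self-contained linear-algebra proof of sufficiency that checks the defining rank identity directly, at the cost of more bookkeeping than the paper's short complement argument.
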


\begin{proof}
	First suppose that \(X\) is modular. Let
	\(H_1,H_2\notin \A_X\) be distinct hyperplanes, and put
	\(Y=H_1\vee H_2=H_1\cap H_2\). Since \(X\) has rank \(r-1\) and neither
	\(H_1\) nor \(H_2\) contains \(X\), we have \(X\vee Y=\hat{1}\). Also
	\(r(Y)=2\). Hence the modular rank identity gives
	\[
	r(X\wedge Y)
	=
	r(X)+r(Y)-r(X\vee Y)
	=
	(r-1)+2-r
	=
	1.
	\]
	Thus \(X\wedge Y\) is a rank-one element of \(L(\A)\), hence a
	hyperplane \(H'\) of \(\A\). Therefore \(H'\in\A_X\) and
	\(H_1\cap H_2=Y\subseteq H'\).

	Conversely, assume that the stated condition holds. By Stanley's
	characterization of modular elements in a geometric lattice, one of the
	equivalent conditions for \(X\) to be modular is that all complements of
	\(X\) are incomparable
	\cite[Proposition~4.10(a)]{Stanley2007}. Hence it is enough to show that
	all complements of \(X\) in \(L(\A)\) are incomparable. Let \(Z\) be a
	complement of \(X\), so that
	\(
	X\wedge Z=\hat{0},
	\;
	X\vee Z=\hat{1}
	\).
	We show that \(r(Z)=1\). Suppose, to the contrary, that \(r(Z)\ge 2\).
	Then \(\A_Z\) contains two distinct hyperplanes \(H_1,H_2\). Moreover,
	no hyperplane in \(\A_Z\) can belong to \(\A_X\); otherwise it would be a
	nonzero lower bound of \(X\) and \(Z\), contradicting
	\(X\wedge Z=\hat{0}\). Hence \(H_1,H_2\notin\A_X\). By the hypothesis,
	there exists \(H'\in\A_X\) such that
	\(H_1\cap H_2\subseteq H'\).
	Since \(Z\subseteq H_1\cap H_2\), this gives \(Z\subseteq H'\), and so
	\(H'\in\A_Z\cap\A_X\), again contradicting \(X\wedge Z=\hat{0}\).
	Therefore every complement \(Z\) of \(X\) has rank one. Distinct
	rank-one elements are incomparable, so all complements of \(X\) are
	incomparable.
\end{proof}

\begin{lemma}\label{lem:lifting}
	Let \(\A\) be a central arrangement, and let \(X\in L(\A)\) be a modular
	coatom. If \(L(\A_X)\) has a maximal modular chain passing through some
	\(H\in\A_X\), then \(L(\A)\) has a maximal modular chain passing through
	\(H\).
\end{lemma}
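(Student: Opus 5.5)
The idea is to append \(X\) itself to the top of the given chain. Let \(r=r(\A)\), so that \(\A_X\) is a central arrangement of rank \(r-1\). Its intersection lattice is the interval \([V,X]\) of \(L(\A)\). Suppose
\[
V=X_0<X_1<\cdots<X_{r-1}=X
\]
is a maximal modular chain in \(L(\A_X)\) with \(H=X_j\) for some \(j\). Since \(X\) is a coatom, the extended chain
\[
V=X_0<X_1<\cdots<X_{r-1}=X<X_r=\hat{1}
\]
is a maximal chain in \(L(\A)\), and it passes through \(H\). We must show every \(X_i\) is modular in \(L(\A)\). For \(X_r=\hat 1\), \(X_0=V\), and \(X_{r-1}=X\) this is immediate: the first two are trivially modular and the last is modular by hypothesis. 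The real content is therefore the classical fact that \emph{a modular element of a modular element is modular}: if \(X\) is modular in \(L(\A)\) and \(Y\le X\) is modular in the interval \([V,X]\), then \(Y\) is modular in \(L(\A)\).

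I would prove this fact using the subspace criterion recalled in Section~2: \(Y\) is modular if and only if \(Y+Z\in L(\A)\) for every \(Z\in L(\A)\). The argument has three steps. Fix \(Z\in L(\A)\) and set \(W=X+Z\).

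\begin{itemize}
\item Since \(X\) is modular, \(W\in L(\A)\). Because \(W\supseteq X\), we have \(W\le X\), so \(W\) lies in \([V,X]=L(\A_X)\).
\item Since \(Y\le X\), that is \(Y\supseteq X\), we get
\[
Y+Z=Y+X+Z=Y+W.
\]
\item Now \(Y\) and \(W\) both belong to \(L(\A_X)\), and \(Y\) is modular there. Hence \(Y+W\in L(\A_X)\subseteq L(\A)\).
\end{itemize}

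Thus \(Y+Z\in L(\A)\), and \(Y\) is modular in \(L(\A)\).

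There is one point to check for the criterion inside \(L(\A_X)\): we need that \(L(\A_X)\) is the interval \([V,X]\) with the same ambient space \(V\), and that flats of \(\A_X\) are exactly the flats of \(\A\) containing \(X\). Both hold because \(\A\) is central with common intersection translated to the origin, so all flats are linear subspaces. The criterion of \cite[Corollary~2.26]{OrlikTerao1992} then applies to \(\A_X\) as an arrangement in \(V\), in the same form.

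The main obstacle is really only this bookkeeping: making sure the sum criterion is used in the same ambient space for both \(\A\) and \(\A_X\), and that the chain has the correct length \(r\). The length is \(r\) because \(r(\A_X)=r(X)=r-1\). Applying the fact to each \(Y=X_i\) with \(i\le r-1\) gives the required maximal modular chain of \(L(\A)\) through \(H\).
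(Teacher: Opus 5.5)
Your proof is correct and follows the same route as the paper: take a maximal modular chain of \([V,X]\cong L(\A_X)\) through \(H\), append \(\hat{1}\), and use the fact that an element modular in the interval below a modular element is modular in the whole lattice. The only difference is that the paper cites this fact from Stanley (Lecture~4, Proposition~4.10(b)), whereas you prove it directly from the sum criterion via \(Y+Z=Y+(X+Z)\) with \(X+Z\in L(\A_X)\), which is a valid, self-contained argument.
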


\begin{proof}
	Let
	\[
	V=X_0<X_1<\cdots<X_{r-1}=X
	\]
	be a maximal modular chain in the interval \([V,X]\cong L(\A_X)\) with
	\(X_1=H\). Since \(X\) is modular in \(L(\A)\), modularity in the
	interval \([V,X]\) lifts to modularity in \(L(\A)\) by
	\cite[Proposition~4.10(b)]{Stanley2007}. Thus
	\[
	V=X_0<X_1<\cdots<X_{r-1}=X<X_r=\hat{1}
	\]
	is a maximal modular chain of \(L(\A)\) passing through \(H\).
\end{proof}

\subsection{Admissible orderings and nice partitions}

\begin{proposition}\label{prop:order-to-ss-infty}
	Assume that \(G\) is a block. If \(G_{\Sgain}\) has an admissible
	ordering, then
	\(L(c\A(G_{\Sgain}))\) has a maximal modular chain passing through
	\(H_\infty\).
\end{proposition}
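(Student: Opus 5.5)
We argue by induction on \(n\), peeling off the first vertex \(v_1\) of an admissible ordering \(v_1,\ldots,v_n\). The base cases are \(n=1\) and \(n=2\). For \(n=1\) the cone is \(\{H_\infty\}\) and the chain \(\widetilde V<H_\infty\) works. For \(n=2\) (a bridge) the cone is a rank-two arrangement, and every rank-one element of a rank-two central arrangement is modular, so \(\widetilde V<H_\infty<\hat 1\) works.

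For the inductive step, let \(G'=G-v_1\) with restricted gains \(\Sgain'\). The ordering \(v_2,\ldots,v_n\) remains admissible for \(G'\). Since admissibility forces the neighbours of \(v_1\) to form a clique, deleting \(v_1\) from a block yields a graph whose blocks still have admissible orderings: restrict the ordering to each block, noting that the inclusions only involve triples inside cliques, hence inside a block. Applying the induction hypothesis blockwise and Lemma~\ref{lem:cone-product}\ref{cone-product:gluing} (together with the block product decomposition of the lattice), \(L(c\A(G'_{\Sgain'}))\) has a maximal modular chain through its hyperplane at infinity.

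Next set \(X=\bigcap_{H\in c\A(G'_{\Sgain'})}H\), viewed in \(\K^{n+1}\). This is the flat \(\{y=0,\ x_{v_2}=\cdots=x_{v_n}\}\), since \(G'\) is connected (\(G\) is a block and \(v_1\) is not a cut vertex) and \(H_\infty\) lies in the subarrangement. Its rank is \((n-1)-1+1=n-1\). The full cone has rank \(n\), since \(G\) is connected, so \(X\) is a coatom. The localization \(c\A(G_{\Sgain})_X\) consists exactly of \(H_\infty\) and the hyperplanes \(cH_{ij}^a\) with \(i,j\ne v_1\): a hyperplane \(cH_{v_1v_j}^a\) does not contain \(X\) because \(x_{v_1}\) is free on \(X\). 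Hence \(L(c\A_X)\cong L(c\A(G'_{\Sgain'}))\) carries a maximal modular chain through \(H_\infty\). By Lemma~\ref{lem:lifting}, it suffices to show that \(X\) is a modular coatom.

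The main step is verifying the criterion of Lemma~\ref{lem:coatom-criterion}. Take distinct \(H_1,H_2\notin\A_X\). These are of the form \(cH_{v_1v_i}^a\) and \(cH_{v_1v_j}^b\). If \(i=j\) and \(a\ne b\), subtracting the two equations gives \((b-a)y=0\), so \(H_1\cap H_2\subseteq H_\infty\in\A_X\). If \(i\ne j\), then on \(H_1\cap H_2\) we have \(x_{v_i}-x_{v_j}=(b-a)y\). Admissibility with \(k=1\) gives \(b-a\in S_{v_iv_j}\), so \(H_1\cap H_2\subseteq cH_{v_iv_j}^{b-a}\in\A_X\). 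This establishes modularity of \(X\), and Lemma~\ref{lem:lifting} then yields the desired chain.

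The delicate point is the blockwise bookkeeping for \(G'\), which need not itself be a block. One must confirm that the chain produced by Lemma~\ref{lem:cone-product}\ref{cone-product:gluing}, iterated over blocks, lives in \(L(c\A(G'_{\Sgain'}))\cong L(c\A_X)\) and passes through \(H_\infty\). Alternatively, the proposition could be strengthened to arbitrary graphs \(G\) with an admissible ordering, where connectivity is not needed and the top flat is handled analogously; this would avoid the block issue altogether.
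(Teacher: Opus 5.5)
Your proof is correct and follows the paper's argument: you use the same coatom \(X=H_\infty\cap\{x_{v_2}=\cdots=x_{v_n}\}\), verify its modularity in the same way via Lemma~\ref{lem:coatom-criterion} (admissibility at \(k=1\), with pairs from one parallel class landing in \(H_\infty\)), and lift the chain with Lemma~\ref{lem:lifting}. The only difference is that your ``delicate point'' never arises, because the paper notes that \(G-v_1\) is again a block: \(v_1\) is simplicial, so for any cut vertex \(u\) of \(G-v_1\), all neighbours of \(v_1\) other than \(u\) lie in one component of \(G-v_1-u\), and \(u\) would then already be a cut vertex of \(G\). The induction therefore applies directly, and your blockwise detour through Lemma~\ref{lem:cone-product}\ref{cone-product:gluing}, though valid, is unnecessary.
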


\begin{proof}
	Let \(v_1,\ldots,v_n\) be an admissible ordering. We argue by induction
	on \(n\). If \(n=1\), then \(\B=\{H_\infty\}\), whose intersection
	lattice has the required modular chain. Assume henceforth that \(n\ge2\),
	and put \(\B=c\A(G_{\Sgain})\).

	Since \(G\) is a block, \(G-v_1\) is connected. In fact, the induced
	subgraph \(G'=G-\{v_1\}\) is again a block, with the one-vertex case
	harmless. Indeed, if a vertex \(u\) were a cut vertex of \(G'\), then all
	neighbors of \(v_1\) different from \(u\) would lie in a single
	component of \(G'-u\), because these neighbors form a clique. Adding
	\(v_1\) back therefore cannot reconnect all components of \(G'-u\).
	Hence \(u\) would be a cut vertex of \(G\), a contradiction. Set
	\[
	X=
	H_\infty\cap
	\{x_{v_2}=x_{v_3}=\cdots=x_{v_n}\}.
	\]
	Choose a spanning tree of \(G-v_1\). Since on \(H_\infty\) every
	hyperplane \(cH_{ij}^{a}\) is given by \(x_i=x_j\), the flat \(X\) is
	obtained by intersecting \(H_\infty\) with hyperplanes corresponding to
	the edges of this tree, choosing one gain \(a_{ij}\in S_{ij}\) for each
	tree edge \(ij\). Hence \(X\in L(\B)\). Since \(G-v_1\) is connected,
	\(X\) has rank \(n-1\), and therefore \(X\) is a coatom of \(L(\B)\).
	The hyperplanes of \(\B\) not containing \(X\) are precisely the
	hyperplanes \(cH_{v_1v_i}^{a}\), where \(i>1\) and
	\(a\in S_{v_1v_i}\). Take two such hyperplanes. If they belong to the
	same edge and are defined by distinct elements of \(S_{v_1v_i}\), then
	their intersection is contained in \(H_\infty\), which contains \(X\).
	If they are
	\(cH_{v_1v_i}^{a}\) and \(cH_{v_1v_j}^{b}\) with \(i\ne j\), then
	admissibility gives \(b-a\in S_{v_iv_j}\), and
	\[
	cH_{v_1v_i}^{a}\cap cH_{v_1v_j}^{b}
	\subseteq
	cH_{v_iv_j}^{b-a}.
	\]
	The hyperplane \(cH_{v_iv_j}^{b-a}\) contains \(X\). By Lemma~\ref{lem:coatom-criterion}, \(X\) is modular in \(L(\B)\).

	Let \(G'\) be the subgraph induced by \(v_2,\ldots,v_n\), and let
	\(\Sgain'\) be the restriction of \(\Sgain\) to \(G'\). The localization
	\(\B_X\) has the same intersection lattice as
	\(c\A(G'_{\Sgain'})\), after forgetting the coordinate \(x_{v_1}\),
	which does not appear in the remaining defining equations.
	The ordering \(v_2,\ldots,v_n\) is admissible for \(G'_{\Sgain'}\).
	By induction, \(L(\B_X)\) has a maximal modular chain passing through
	\(H_\infty\).
	Lemma~\ref{lem:lifting} lifts the corresponding chain to \(L(\B)\), so
	\(L(\B)\) has a maximal modular chain passing through \(H_\infty\).
\end{proof}

\begin{proposition}\label{prop:ss-infty-to-nice}
	If \(L(c\A(G_{\Sgain}))\) has a maximal modular chain passing through
	\(H_\infty\), then \(\A(G_{\Sgain})\) admits a nice partition.
\end{proposition}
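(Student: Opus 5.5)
Starting from a maximal modular chain
\[
\widetilde V=X_0<X_1=H_\infty<X_2<\cdots<X_r=\hat 1
\]
in \(L(c\A(G_{\Sgain}))\), we apply the construction recalled in Section~2: the partition \(\tilde\pi_i=\B_{X_i}\setminus\B_{X_{i-1}}\) of \(\B=c\A(G_{\Sgain})\) is nice \cite[Example~2.4]{Terao1992}. Since \(X_1=H_\infty\) has rank one, \(\tilde\pi_1=\{H_\infty\}\) is a singleton part. Removing it and restricting to the chart \(y=1\) yields a partition \(\pi=(\pi_2,\ldots,\pi_r)\) of \(\A(G_{\Sgain})\), obtained by the identification \(cH_{ij}^a\leftrightarrow H_{ij}^a\). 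We will check that \(\pi\) is nice.

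For independence, take a \(p\)-section \(\{H_1,\ldots,H_p\}\) of \(\pi\). Adjoining \(H_\infty\) gives the \((p+1)\)-section \(\{H_\infty,cH_1,\ldots,cH_p\}\) of \(\tilde\pi\), which is independent. Hence \(W=H_\infty\cap cH_1\cap\cdots\cap cH_p\) has codimension \(p+1\) in \(\K^{n+1}\). It follows that the linear parts of the \(cH_i\) together with \(y\) are linearly independent, so the linear forms \(x_i-x_j\) of \(H_1,\ldots,H_p\) are linearly independent on \(\K^n\). Linearly independent affine equations always have a solution. Therefore \(H_1\cap\cdots\cap H_p\) is nonempty of codimension \(p\).

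For the singleton condition, let \(X\in L(\A(G_{\Sgain}))\) with \(X\ne V\), and let \(cX\) be the corresponding flat of \(\B\), namely the linear span of \(X\times\{1\}\), which equals the intersection of the coned hyperplanes containing \(X\). The key point is that, for \(H\in\A(G_{\Sgain})\), we have \(X\subseteq H\) exactly when \(cX\subseteq cH\), and that \(H_\infty\not\supseteq cX\), since \(cX\) contains points with \(y=1\). Hence
\[
\B_{cX}=\{cH\mid H\in\A(G_{\Sgain})_X\},
\]
and intersecting with parts gives \(|\tilde\pi_i\cap\B_{cX}|=|\pi_i\cap\A_X|\) for \(i\ge2\), while \(\tilde\pi_1\cap\B_{cX}=\emptyset\). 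The singleton condition for \(\tilde\pi\) at \(cX\ne\widetilde V\) provides an \(i\) with \(|\tilde\pi_i\cap\B_{cX}|=1\). Necessarily \(i\ge2\), so \(|\pi_i\cap\A_X|=1\).

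The main care is in the correspondence \(X\mapsto cX\). One must verify that \(cX\in L(\B)\), that \(H_\infty\) does not contain it, and that containment \(X\subseteq H\) is exactly equivalent to \(cX\subseteq cH\). All three follow directly from writing \(X\) as an intersection of affine hyperplanes and homogenizing their equations. A minor remaining point is that parts \(\pi_i\) may be empty after the identification. This cannot occur, since each \(X_i\) with \(i\ge2\) strictly increases the localization. Even if it did, the empty parts would simply be discarded without affecting either condition.
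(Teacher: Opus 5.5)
Your proof is correct and follows essentially the same route as the paper: you drop the singleton part $\{H_\infty\}$ from the nice partition that the modular chain induces (via Terao), derive independence from the $(p+1)$-section obtained by adjoining $H_\infty$, and obtain the singleton condition by passing from $X$ to its homogenization $cX\not\subseteq H_\infty$. The only difference is cosmetic: you read off linear independence of the affine linear parts directly from the $(p+1)$-section, whereas the paper first uses the $p$-section to get codimension $p$ and then uses the $(p+1)$-section to show the intersection is not contained in $H_\infty$.
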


\begin{proof}
	Let
	\[
	\widetilde V=X_0<X_1<\cdots<X_r=\hat{1}
	\]
	be a maximal modular chain in \(L(c\A(G_{\Sgain}))\) with
	\(X_1=H_\infty\). Let \(\pi=(\pi_1,\ldots,\pi_r)\) be the partition of
	\(c\A(G_{\Sgain})\) induced by this chain. The modular-chain
	construction recalled in Section 2 makes \(\pi\) nice
	\cite[Example~2.4]{Terao1992}; see also
	\cite[Proposition~2.67]{OrlikTerao1992}. Since \(X_1=H_\infty\), we
	have \(\pi_1=\{H_\infty\}\). Remove the part \(\pi_1\) and pass the
	remaining hyperplanes to the affine chart \(y=1\). This gives a partition
	\(\bar\pi\) of
	\(\A(G_{\Sgain})\).

	For independence, choose hyperplanes \(H_1,\ldots,H_p\), one from each
	of \(p\) distinct parts of \(\bar\pi\). Their cones
	\(cH_1,\ldots,cH_p\) lie in distinct parts of \(\pi\), so
	\[
	cH_1\cap\cdots\cap cH_p
	\]
	has codimension \(p\). This intersection is not contained in
	\(H_\infty\); otherwise adding \(H_\infty\) would give a section of
	\(\pi\) of size \(p+1\) whose intersection still has codimension \(p\),
	contrary to the independence of \(\pi\). Hence the affine intersection
	\(H_1\cap\cdots\cap H_p\) is nonempty of codimension \(p\). Thus
	\(\bar\pi\) is independent.

	For the singleton condition, let
	\(X\in L(\A(G_{\Sgain}))\) with \(X\ne\K^n\), and let
	\(\widetilde X\in L(c\A(G_{\Sgain}))\) be the corresponding element not
	contained in \(H_\infty\). Since \(\pi\) is nice, there is a part
	\(\pi_i\) such that
	\[
	|\pi_i\cap (c\A(G_{\Sgain}))_{\widetilde X}|=1.
	\]
	As \(\widetilde X\nsubseteq H_\infty\), this part is not
	\(\pi_1=\{H_\infty\}\). Passing its unique hyperplane to the affine chart
	\(y=1\) gives a singleton part in the localization of
	\(\A(G_{\Sgain})\) at \(X\).
	Therefore \(\bar\pi\) is nice.
\end{proof}

\begin{corollary}\label{cor:ss-infty-nice-order}
	Assume that \(G\) is a block.
	The following conditions are equivalent:
	\begin{enumerate}[label=(\arabic*)]
		\item \(L(c\A(G_{\Sgain}))\) has a maximal modular chain passing
		through \(H_\infty\);
		\item \(\A(G_{\Sgain})\) admits a nice partition;
		\item \(G_{\Sgain}\) has an admissible ordering.
	\end{enumerate}
\end{corollary}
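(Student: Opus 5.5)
The plan is to close a cycle of implications among the three conditions, using only results already established for a block \(G\).

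First, (1) implies (2): Proposition~\ref{prop:ss-infty-to-nice} shows that a maximal modular chain of \(L(c\A(G_{\Sgain}))\) through \(H_\infty\) induces a nice partition of \(\A(G_{\Sgain}))\). This implication needs no block hypothesis. The partition is obtained by dropping the singleton part \(\{H_\infty\}\) and passing to the chart \(y=1\).

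Second, (2) implies (3): this is the forward direction of Theorem~\ref{thm:nice-order}, which requires \(G\) to be a block. In outline, Proposition~\ref{prop:nice-star-order} rewrites any nice partition as \(\pi_k=\bigcup_{i>k}\A_{v_kv_i}\) for some vertex ordering. The singleton condition at \(H_{v_kv_i}^{a}\cap H_{v_kv_j}^{b}\) then forces \(b-a\in S_{v_iv_j}\), which is exactly admissibility of that ordering.

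Third, (3) implies (1): Proposition~\ref{prop:order-to-ss-infty} builds, by induction on \(n\), a maximal modular chain through \(H_\infty\) from an admissible ordering. At each step it peels off the first vertex, shows that the coatom \(X=H_\infty\cap\{x_{v_2}=\cdots=x_{v_n}\}\) is modular using Lemma~\ref{lem:coatom-criterion}, and lifts the chain for the smaller block using Lemma~\ref{lem:lifting}.

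The only step needing care is that the block hypothesis is used where the cited results require it, namely in (2)\(\Rightarrow\)(3) and (3)\(\Rightarrow\)(1). In the latter, the induction must stay inside the class of blocks. Proposition~\ref{prop:order-to-ss-infty} already checks this: deleting the simplicial first vertex \(v_1\) of a block leaves a block. So there is no genuine obstacle, and the corollary follows by assembling the three implications.
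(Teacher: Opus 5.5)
Your proposal is correct and matches the paper's proof: the paper derives the corollary by chaining Proposition~\ref{prop:ss-infty-to-nice} for (1)$\Rightarrow$(2), Theorem~\ref{thm:nice-order} for (2)$\Rightarrow$(3), and Proposition~\ref{prop:order-to-ss-infty} for (3)$\Rightarrow$(1). Your remarks on where the block hypothesis is needed, including that the induction stays within blocks, also agree with the paper.
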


\begin{proof}
	This follows from Theorem~\ref{thm:nice-order}, Proposition~\ref{prop:order-to-ss-infty}, and Proposition~\ref{prop:ss-infty-to-nice}.
\end{proof}

\subsection{Forcing the chain through \texorpdfstring{\(H_\infty\)}{H-infinity}}

\begin{lemma}\label{lem:proper-clique}
	Let \(G\) be a chordal graph and let \(K\) be a proper clique of \(G\).
	Then \(G\) has a simplicial vertex outside \(K\), that is, a vertex
	whose neighbors form a clique.
\end{lemma}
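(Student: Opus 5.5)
This is Dirac's classical lemma, and I would prove it directly by induction on \(n=|V(G)|\). The stronger form I would carry through the induction is: every chordal graph that is not complete has two nonadjacent simplicial vertices, and every complete graph has all of its vertices simplicial.

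The lemma follows from this strengthening. If \(G\) is complete, then \(K\) proper means some vertex lies outside \(K\), and that vertex is simplicial. If \(G\) is not complete, it has two nonadjacent simplicial vertices. A clique cannot contain two nonadjacent vertices, so at least one of them lies outside \(K\). (Here a proper clique is read as a clique \(K\ne V(G)\).)

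For the strengthened statement, let \(G\) be chordal and not complete, and choose nonadjacent vertices \(a,b\). Take a minimal \(a\)--\(b\) separator \(S\); if \(a\) and \(b\) lie in different components of \(G\), then \(S=\emptyset\).

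The key step is to show that \(S\) is a clique. Let \(C_a\) and \(C_b\) be the components of \(G-S\) containing \(a\) and \(b\). By minimality of \(S\), every \(s\in S\) has a neighbor in \(C_a\) and a neighbor in \(C_b\). Suppose \(x,y\in S\) are nonadjacent. Take a shortest \(x\)--\(y\) path whose interior lies in \(C_a\), and another whose interior lies in \(C_b\). Gluing them gives a cycle of length at least four. A chord of this cycle cannot join the \(C_a\) side to the \(C_b\) side, since \(S\) separates them. It cannot lie within one side, since the paths are shortest, and \(xy\notin E(G)\) by assumption. So the cycle is chordless, contradicting chordality.

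Next, apply induction to \(G_a=G[C_a\cup S]\), which is chordal and has fewer vertices because \(b\notin C_a\cup S\). If \(G_a\) is complete, any vertex of \(C_a\) is simplicial in \(G_a\); \(C_a\) is nonempty because it contains \(a\). If \(G_a\) is not complete, it has two nonadjacent simplicial vertices. Since \(S\) is a clique, they cannot both lie in \(S\), so one of them lies in \(C_a\). In either case we get a vertex \(u\in C_a\) that is simplicial in \(G_a\). All neighbors of \(u\) in \(G\) lie in \(C_a\cup S\), so \(u\) is simplicial in \(G\).

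The same argument applied to \(G_b=G[C_b\cup S]\) gives a simplicial vertex \(w\in C_b\). Since \(S\) separates \(C_a\) from \(C_b\), \(u\) and \(w\) are nonadjacent, which completes the induction.

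The main obstacle is the clique-separator step, namely checking that the glued cycle has no chords. This is where minimality of \(S\) is essential: it guarantees that both sides actually contain \(x\)--\(y\) paths.
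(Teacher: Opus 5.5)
Your proof is correct and matches the paper's argument: in the complete case any vertex outside \(K\) is simplicial, and otherwise two nonadjacent simplicial vertices cannot both lie in the clique \(K\). The only difference is that the paper cites Dirac's theorem for the existence of two nonadjacent simplicial vertices. You instead prove that fact with the standard minimal-separator induction, and that argument is also correct.
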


\begin{proof}
	If \(G\) is complete, every vertex is simplicial, and any vertex in
	\(V(G)\setminus K\) has the required property. Otherwise, by Dirac's
	theorem, every non-complete chordal graph has two nonadjacent simplicial
	vertices \cite{Dirac1961}. Since \(K\) is a clique, two nonadjacent
	vertices cannot both lie in \(K\). Hence at least one of these
	simplicial vertices lies outside \(K\).
\end{proof}

We record a normalization used in the next proof. Let
\(\tau=(t_1,\ldots,t_n)\in\K^n\), and define a new family
\(\Sgain^\tau=(S_{ij}^\tau)\) by
\[
S_{ij}^\tau=\{a-t_i+t_j\mid a\in S_{ij}\}.
\]
Consider the linear automorphism \(\Phi_\tau\) of \(\K^{n+1}\) given by
\[
\Phi_\tau(x_1,\ldots,x_n,y)
=
(x_1-t_1y,\ldots,x_n-t_ny,y).
\]
It fixes \(H_\infty\) pointwise. Moreover, if \(a\in S_{ij}\), then
\(\Phi_\tau\) sends \(cH_{ij}^{a}\) onto \(cH_{ij}^{a-t_i+t_j}\).
Hence \(\Phi_\tau\) carries the arrangement \(c\A(G_{\Sgain})\) onto
\(c\A(G_{\Sgain^\tau})\), and therefore induces an isomorphism
\[
L(c\A(G_{\Sgain}))\cong L(c\A(G_{\Sgain^\tau}))
\]
which fixes the element \(H_\infty\). Thus supersolvability, modularity,
and the existence of a maximal modular chain passing through
\(H_\infty\) are unchanged after replacing \(\Sgain\) by
\(\Sgain^\tau\).

It remains to prove that, for a block, supersolvability of the cone forces
the existence of a maximal modular chain through \(H_\infty\).

\begin{theorem}\label{thm:ss-implies-infty}
	Let \(\A(G_{\Sgain})\) be defined as in Section 2, and assume that
	\(G\) is a block. If
	\(c\A(G_{\Sgain})\) is supersolvable, then
	\(L(c\A(G_{\Sgain}))\) has a maximal modular chain passing through
	\(H_\infty\).
\end{theorem}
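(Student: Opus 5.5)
Since \(L(c\A(G_{\Sgain}))\) is a geometric lattice, I will proceed by induction on \(n\), peeling off one modular coatom at a time and then applying Lemma~\ref{lem:lifting}. The case \(n=1\) is immediate.

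For \(n\ge2\), let \(\widetilde V=X_0<\cdots<X_r=\hat{1}\) be a maximal modular chain, with \(r=n\), and set \(X=X_{r-1}\). This \(X\) is a modular coatom. I will identify \(X\) with a vertex \(v\) of \(G\) in the following sense: \(X\) contains \(H_\infty\), and \(\B_X\) is, up to translation, the cone of the gain graph on \(G-v\).

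The argument splits on whether \(X\) is contained in \(H_\infty\).

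\emph{Case 1: \(X\subseteq H_\infty\).} On \(H_\infty\), every \(cH_{ij}^a\) restricts to \(x_i=x_j\). Hence \(X\) is \(H_\infty\cap\{\text{equalities given by a partition of } [n]\}\), and being a coatom forces the partition to have exactly two blocks \(P,Q\). The hyperplanes not containing \(X\) are those on the edges between \(P\) and \(Q\). Suppose there are two such edges \(pq\) and \(p'q'\) with \(p\ne p'\) and \(q\ne q'\). Apply Lemma~\ref{lem:coatom-criterion} to \(cH_{pq}^a\) and \(cH_{p'q'}^b\). Their intersection has rank two, and any \(H'\) containing it must be a linear combination of the two forms. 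Neither \(H_\infty\) nor any edge inside \(P\) or inside \(Q\) is such a combination, because the coefficient pattern involves four distinct vertices. This is a contradiction.

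If instead two crossing edges share an endpoint, say \(pq\) and \(pq'\) with \(q,q'\in Q\), the criterion produces \(cH_{qq'}^{b-a}\); this is admissible and forces the edge \(qq'\). So the crossing edges form a star centered at a single vertex \(v\). Since \(G\) is a block, \(G-v\) must still be connected, which means one side, say \(P\), equals \(\{v\}\). The criterion applied to all pairs \(cH_{vq}^a,cH_{vq'}^b\) then gives exactly the admissibility condition at \(v\), namely \(S_{vq'}-S_{vq}\subseteq S_{qq'}\). This also makes \(N(v)\) a clique.

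Now \(\B_X\cong c\A(G'_{\Sgain'})\) with \(G'=G-v\) a block, by the argument in Proposition~\ref{prop:order-to-ss-infty}. This cone is supersolvable, since the chain \(X_0<\cdots<X_{r-1}\) is modular in \([\widetilde V,X]\). By induction, \(L(\B_X)\) has a maximal modular chain through \(H_\infty\), and Lemma~\ref{lem:lifting} finishes this case.

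\emph{Case 2: \(X\not\subseteq H_\infty\).} Then \(H_\infty\notin\B_X\). Here I would instead show that some other modular coatom of the required form exists. Since \(X\not\subseteq H_\infty\), the localization \(\B_X\) is central with a point off infinity. Using the normalization \(\Sgain^\tau\), I can translate so that \(X\) corresponds to an affine flat through the origin. Then \(\B_X\) is the graphic-type localization: it consists of the hyperplanes \(cH_{ij}^{0}\) for the edges of some subgraph \(F\), where the \(0\)-gains are those present after normalization.

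Apply Lemma~\ref{lem:coatom-criterion} to pairs consisting of \(H_\infty\) and some \(cH_{ij}^a\notin\B_X\): their intersection must lie in a hyperplane of \(\B_X\). Such a hyperplane contains \(H_\infty\cap cH_{ij}^a=\{y=0,\ x_i=x_j\}\), and it must be \(cH_{ij}^{0}\). So every edge \(ij\) outside \(F\) would need \(0\in S_{ij}\) after normalization. Combining this with the rank condition \(r(X)=n-1\) should force all of \(G\)'s zero-gain hyperplanes to form a coatom. That is impossible, since the rank of the zero-gain part is at most \(n-1\) only if all of \(G\) lies in \(F\), and then \(X\) together with \(H_\infty\) give rank at least \(n\), in which case \(X\) has no complement. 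I expect this to show that Case 2 degenerates. Specifically, I would show that \(\hat{1}=X\cap H_\infty\) and that \(\B=\B_X\cup\{H_\infty\}\) is essentially graphic, i.e.\ all gain sets are singletons after normalization.

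In that degenerate situation the cone is isomorphic to \(c\A(G)\) with \(S_{ij}=\{0\}\), and \(G\) is chordal by Lemma~\ref{lem:nice-implies-chordal}; strictly, by Stanley's theorem. Lemma~\ref{lem:proper-clique} then provides a simplicial vertex, hence a perfect elimination ordering, which here is an admissible ordering. Proposition~\ref{prop:order-to-ss-infty} then yields the desired chain.

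I expect Case 2 to be the main obstacle. Specifically, the hard step is showing rigorously that a modular coatom not containing \(H_\infty\) forces every parallel class to be a singleton, or else that one can swap \(X\) for a modular coatom of the Case 1 form. Lemma~\ref{lem:proper-clique}, applied to the clique spanned by \(F\), is probably what supplies the replacement simplicial vertex \(v\) with \(v\notin F\).
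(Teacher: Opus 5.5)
Your Case~1 is correct. It even avoids the block-product gluing the paper uses in its Step~1. You show that, for a block, a modular coatom inside \(H_\infty\) must separate a single vertex \(v\) satisfying the admissibility condition, so \(G-v\) is again a block.

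The gap is Case~2, and the route you sketch there is false. Case~2 does not degenerate, and the gain sets need not become singletons. For \(n=2\) and \(S_{12}=\{0,1\}\), the cone has rank two, so \(\widetilde V<cH_{12}^{0}<\hat 1\) is a maximal modular chain whose coatom is not in \(H_\infty\). A less trivial example is the triangle with \(S_{12}=S_{13}=\{0,1\}\) and \(S_{23}=\{0\}\). The coatom \(Y=cH_{12}^{0}\cap cH_{13}^{0}\) is modular by Lemma~\ref{lem:coatom-criterion}; the only nontrivial pair is \(cH_{12}^{1}\cap cH_{13}^{1}\subseteq cH_{23}^{0}\). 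What your computation establishes is correct but contradicts nothing: after normalization \(\B_X=\{cH_{ij}^{0}\mid ij\in E(G)\}\), so \(F=G\), and \(X\), \(H_\infty\) are complements. This says nothing about the hyperplanes \(cH_{ij}^{a}\) with \(a\ne0\), which is where the work lies.

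The missing idea is to apply modularity of \(X\) to pairs of nonzero-gain hyperplanes, pin down \(M=\{ij\mid S_{ij}\ne\{0\}\}\), and then build a different modular coatom inside \(H_\infty\).
\begin{itemize}
\item Since \(\B_X\) contains only zero-gain hyperplanes, Lemma~\ref{lem:coatom-criterion} forces any two edges of \(M\) to meet. It also forces nonzero gains \(\alpha\in S_{pq}\), \(\beta\in S_{pr}\) on edges of \(M\) at a common vertex \(p\) to satisfy \(\alpha=\beta\) and \(qr\in E(G)\). Hence the vertex set \(K\) of \(M\) spans a clique.
\item Independently, \([\widetilde V,X]\cong[H_\infty,\hat 1]\) is the graphic lattice of \(G\), so \(G\) is chordal by Stanley's theorem.
\item If \(K\ne V(G)\), apply Lemma~\ref{lem:proper-clique} to \(K\), not to \(F\), which is all of \(G\). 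This gives a simplicial \(v\notin K\) whose incident gain sets are all \(\{0\}\). Then \(X_v=H_\infty\cap\bigcap_{jk\in E(G-v)}cH_{jk}^{0}\) is a modular coatom.
\item If \(K=V(G)\), Lemma~\ref{lem:proper-clique} does not help, and a separate argument is needed. Either \(M\) is a star, or it is a triangle. In the star case, take a leaf \(v\) and use the equal-gain condition at the centre to check modularity of \(X_v\). In the triangle case \(n=3\), and you verify \(S_{vw}-S_{vu}\subseteq S_{uw}\) at any vertex by cases.
\end{itemize}

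Finally, when you feed such an \(X_v\) into your Case~1 argument, supersolvability of \(\B_{X_v}\) no longer follows from the original chain. You need instead that intervals of a supersolvable lattice are supersolvable, as the paper uses.
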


\begin{proof}
	\emph{Step 1: reduce to a modular coatom contained in \(H_\infty\).}
	We prove the assertion simultaneously for all gain graphs whose underlying
	graph is a block, by strong induction on the rank of the cone. Put
	\(\B=c\A(G_{\Sgain})\). In rank at most two, every hyperplane is modular, so
	\(H_\infty\) can be placed in a maximal modular chain. Hence assume
	\(r(\B)>2\).
	It is enough to find a modular coatom
	\(X\in L(\B)\) such that \(X\subseteq H_\infty\).
	Since \(X\subseteq H_\infty\), whether a cone hyperplane contains
	\(X\) depends only on its underlying edge, not on its gain.
	Hence \(\B_X\) is the cone associated
	with the restricted gain graph \(F_{\Sgain|_F}\), where \(F\) may be
	disconnected and may have cut vertices, and it
	has rank \(r(\B)-1\). Moreover, \(L(\B_X)\) is an interval in the
	supersolvable lattice \(L(\B)\), so it is supersolvable
	\cite[Lecture~4, Exercise~19]{Stanley2007}.
	Let \(F_1,\ldots,F_t\) be the blocks of \(F\), and put
	\(\A_\mu^F=\A((F_\mu)_{\Sgain|_{F_\mu}})\). By the block-product
	decomposition in Section~2, there is an isomorphism
	\[
	L(\B_X)
	\cong
	L\bigl(c(\A_1^F\times\cdots\times\A_t^F)\bigr)
	\]
	which preserves the hyperplane at infinity. Thus the cone of the product
	on the right is supersolvable. Iterating Lemma~\ref{lem:cone-product}\ref{cone-product:reflection} shows that every
	block cone \(c\A_\mu^F\) is supersolvable. Each block cone has rank at
	most \(r(\B_X)=r(\B)-1\), so the strong induction hypothesis applies to
	each block \(F_\mu\) and gives a maximal modular chain in
	\(L(c\A_\mu^F)\) through its hyperplane at infinity.
	Iterating Lemma~\ref{lem:cone-product}\ref{cone-product:gluing} combines these block
	chains into a maximal modular chain through the common hyperplane at
	infinity in
	\(L(c(\A_1^F\times\cdots\times\A_t^F))\). Transporting it through the
	isomorphism above gives such a chain in \(L(\B_X)\). Finally, Lemma~\ref{lem:lifting} lifts that chain to \(L(\B)\).

	\emph{Step 2: normalize a modular coatom not contained in \(H_\infty\).}
	Since \(\B\) is
	supersolvable, choose a maximal modular chain
	\[
	\widetilde V=X_0<X_1<\cdots<X_{r-1}=Y<X_r=\hat{1}.
	\]
	If \(Y\subseteq H_\infty\), we take \(X=Y\). Hence assume that
	\(Y\nsubseteq H_\infty\).
	Since \(H_\infty\notin\B_Y\), we have
	\(Y\wedge H_\infty=\widetilde V\). Modularity of \(Y\) then gives
	\(r(Y\vee H_\infty)=r(\B)\), so \(Y\vee H_\infty=\hat{1}\). Thus
	\(Y\) and \(H_\infty\) are complements, and the standard modular interval
	isomorphism is
	\[
	[\widetilde V,Y]\cong [H_\infty,\hat{1}].
	\]
	In lattice notation, this map is \(Z\mapsto Z\vee H_\infty\), with
	inverse \(W\mapsto Y\wedge W\).
	The latter interval is the intersection lattice of the graphic
	arrangement of \(G\). Its atoms correspond to the edges of \(G\), while
	the atoms of \([\widetilde V,Y]\) are the hyperplanes in \(\B_Y\).
	Therefore \(\B_Y\) contains exactly one hyperplane from each parallel class
	\(\{cH_{ij}^{a}\mid a\in S_{ij}\}\).
	Write this distinguished hyperplane as \(cH_{ij}^{a_{ij}}\).
	Since \(Y\nsubseteq H_\infty\), choose a point
	\((t_1,\ldots,t_n,1)\in Y\). For every edge \(ij\in E(G)\), the unique
	hyperplane in the parallel class \(\{cH_{ij}^{a}\mid a\in S_{ij}\}\)
	which contains \(Y\) is \(cH_{ij}^{a_{ij}}\), where
	\(a_{ij}=t_i-t_j\). Replace each set \(S_{ij}\) by
	\(S_{ij}-a_{ij}\).
	This replacement is induced by the linear automorphism
	\[
	(x_1,\ldots,x_n,y)\mapsto
	(x_1-t_1y,\ldots,x_n-t_ny,y),
	\]
	which fixes \(H_\infty\) and preserves the intersection lattice of the
	cone. Hence we may assume, from now on, that
	\[
	\B_Y=\{cH_{ij}^{0}\mid ij\in E(G)\}.
	\]
	\emph{Step 3: extract the restrictions forced by modularity.}
	Let
	\[
	M=\{ij\in E(G)\mid S_{ij}\ne\{0\}\}.
	\]
	If \(ij,kl\in M\) are disjoint, choose nonzero
	\(a\in S_{ij}\) and \(b\in S_{kl}\). Then
	\(cH_{ij}^{a}\cap cH_{kl}^{b}\)
	is not contained in any zero hyperplane \(cH_{pq}^{0}\). Indeed, such a
	containment would imply the linear-form relation
	\[
	x_p-x_q\in
	\operatorname{span}_{\K}
	\{x_i-x_j-ay,\ x_k-x_l-by\}.
	\]
	Because the two edges are disjoint, comparison of the four vertex
	coefficients shows that a nonzero combination on the right either has
	support on one of the two edges or on all four vertices. In the first
	case its \(y\)-coefficient is nonzero since \(a,b\ne0\), and in the
	second case it cannot equal a two-variable form \(x_p-x_q\). This is a
	contradiction. Thus no zero hyperplane contains the intersection, contrary
	to Lemma~\ref{lem:coatom-criterion}; hence any two edges in \(M\) meet.

	Next suppose that \(ij,ik\in M\). Choose nonzero
	\(a\in S_{ij}\) and \(b\in S_{ik}\). The intersection
	\[
	cH_{ij}^{a}\cap cH_{ik}^{b}
	\]
	can be contained in a zero hyperplane from \(\B_Y\) only if
	\(jk\in E(G)\) and \(a=b\): on the affine chart \(y=1\), the two
	equations force \(x_j-x_k=b-a\), and the only possible forced equality
	among the three vertices is \(x_j=x_k\). Equivalently, for any two
	edges of \(M\) meeting at a vertex \(p\),
	\[
	\tag{*}\label{eq:local-M}
	\begin{gathered}
		pq,pr\in M,\quad
		\alpha\in S_{pq}\setminus\{0\},\quad
		\beta\in S_{pr}\setminus\{0\}  \\
		\Longrightarrow\quad
		qr\in E(G)\ \text{and}\ \alpha=\beta.
	\end{gathered}
	\]
	Together with the preceding paragraph, this shows that the set \(K\) of
	vertices incident with edges of \(M\) spans a clique. Indeed, for
	\(u,v\in K\), choose incident edges \(e,f\in M\). If either is \(uv\),
	there is nothing to prove; otherwise the pairwise-intersection property
	forces \(e=up\) and \(f=vp\) for some \(p\), and
	\eqref{eq:local-M} supplies the edge \(uv\).
	The interval \([\widetilde V,Y]\) is supersolvable, hence so is
	\([H_\infty,\hat{1}]\). Since this interval is the lattice of the graphic
	arrangement of \(G\), Stanley's theorem implies that \(G\) is chordal.

	\emph{Step 4: construct the desired modular coatom.}
	Assume first that \(K\ne V(G)\). By Lemma~\ref{lem:proper-clique},
	there is a simplicial vertex \(v\notin K\). Then \(S_{vj}=\{0\}\) for
	every edge \(vj\). Define
	\[
	X_v=
	H_\infty
	\cap
	\bigcap_{\substack{jk\in E(G)\\ j,k\ne v}} cH_{jk}^{0}.
	\]
	Because \(G\) is a block, \(G-v\) is connected; the displayed equations
	therefore have rank \(n-1\), so \(X_v\) is a coatom contained in
	\(H_\infty\). The hyperplanes not containing \(X_v\) are precisely the
	hyperplanes \(cH_{vj}^{0}\). If \(cH_{vj}^{0}\) and \(cH_{vk}^{0}\) are
	two such hyperplanes with \(j\ne k\), then \(jk\in E(G)\), and
	\[
	cH_{vj}^{0}\cap cH_{vk}^{0}\subseteq cH_{jk}^{0}\in \B_{X_v}.
	\]
	Thus \(X_v\) is modular by Lemma~\ref{lem:coatom-criterion}.

	It remains to consider \(K=V(G)\). Since any two edges in \(M\) meet,
	either \(M\) is a star or \(M\) consists of the three edges of a triangle.

	\begin{itemize}[leftmargin=2em]
		\item \emph{Star case.}
		Suppose that \(M\) is a star with center \(q\). Choose a leaf \(v\), and
		note that \(G\) is complete. If the star has at least two leaves, then
		applying \eqref{eq:local-M} to any two edges \(qj,qk\in M\) gives
		\(jk\in E(G)\); if it has only one leaf, this is the two-vertex
		case. Hence \(v\) is simplicial. Define
		\[
		X_v=
		H_\infty
		\cap
		\bigcap_{\substack{jk\in E(G)\\ j,k\ne v}} cH_{jk}^{0}.
		\]
		As above, \(X_v\) is a coatom contained in \(H_\infty\). The hyperplanes
		not containing \(X_v\) are precisely the hyperplanes \(cH_{vj}^{a}\).
		If \(j\ne q\), then \(vj\notin M\), and hence \(S_{vj}=\{0\}\).

		Apply Lemma~\ref{lem:coatom-criterion}. Pairs from
		the same class \(\A_{vj}\) have intersection contained in
		\(H_\infty\). If \(j,k\ne q\), then
		\[
		cH_{vj}^{0}\cap cH_{vk}^{0}\subseteq cH_{jk}^{0}\in\B_{X_v}.
		\]
		It remains to consider \(cH_{vq}^{a}\) and \(cH_{vj}^{0}\), where
		\(j\ne q\). If \(a=0\), their intersection is contained in
		\(cH_{qj}^{0}\). If \(a\ne0\), then \(-a\in S_{qv}\). Since
		\(qv,qj\in M\), choose \(\beta\in S_{qj}\setminus\{0\}\). Applying
		\eqref{eq:local-M} at \(q\) to \(-a\in S_{qv}\) and
		\(\beta\in S_{qj}\) gives \(\beta=-a\). Hence \(-a\in S_{qj}\), and
		\[
		cH_{vq}^{a}\cap cH_{vj}^{0}\subseteq cH_{qj}^{-a}\in\B_{X_v}.
		\]
		Therefore \(X_v\) is modular.

		\item \emph{Triangle case.}
		Suppose that \(M\) is not a star. Then \(G\) has exactly three vertices.
		Choose a vertex \(v\), and let \(u,w\) be the other two vertices. We
		first show that
		\[
		S_{vw}-S_{vu}\subseteq S_{uw}.
		\]
		Let \(a\in S_{vu}\) and \(b\in S_{vw}\). If \(a=b=0\), the claim is
		immediate. If both \(a\) and \(b\) are nonzero, then
		\eqref{eq:local-M} applied at \(v\) gives \(a=b\), so
		\(b-a=0\in S_{uw}\). If \(a=0\) and \(b\ne0\), apply
		\eqref{eq:local-M} at \(w\) to the nonzero elements
		\(-b\in S_{wv}\) and an element of \(S_{wu}\setminus\{0\}\); this gives
		\(b\in S_{uw}\). If \(a\ne0\) and \(b=0\), the same argument at \(u\)
		gives \(-a\in S_{uw}\). Thus \(b-a\in S_{uw}\) in all cases.

		Put
		\[
		X_v=H_\infty\cap cH_{uw}^{0}.
		\]
		Then \(X_v\) is a coatom contained in \(H_\infty\). The hyperplanes not
		containing \(X_v\) belong to the two classes \(\A_{vu}\) and
		\(\A_{vw}\). Pairs from the same class have intersection contained in
		\(H_\infty\). For \(cH_{vu}^{a}\) and \(cH_{vw}^{b}\), we have
		\[
		cH_{vu}^{a}\cap cH_{vw}^{b}
		\subseteq
		cH_{uw}^{b-a}.
		\]
		Since \(b-a\in S_{uw}\), the last hyperplane belongs to \(\B_{X_v}\).
		Hence \(X_v\) is modular by Lemma~\ref{lem:coatom-criterion}.
	\end{itemize}
	In all cases we have found a modular coatom contained in \(H_\infty\).
	By the induction argument at the beginning of the proof, the theorem
	follows.
\end{proof}

\begin{proof}[Proof of Theorem~\ref{thm:main}]
	The implication
	\ref{main:ss-infty}\(\Rightarrow\)\ref{main:ss} is immediate, and
	\ref{main:ss}\(\Rightarrow\)\ref{main:ss-infty} is Theorem~\ref{thm:ss-implies-infty}. The equivalence of
	\ref{main:ss-infty}, \ref{main:nice}, and \ref{main:order} is Corollary~\ref{cor:ss-infty-nice-order}.
\end{proof}

\begin{corollary}\label{cor:blockwise-general}
	Let \(G\) be an arbitrary graph with blocks \(G_1,\ldots,G_s\), and let
	\(\Sgain_\nu\) be the restriction of \(\Sgain\) to \(G_\nu\). Then the
	following conditions are equivalent:
	\begin{enumerate}[label=(\arabic*)]
		\item The cone \(c\A(G_{\Sgain})\) is supersolvable.

		\item The lattice \(L(c\A(G_{\Sgain}))\) has a maximal modular chain
		passing through \(H_\infty\).

		\item \(\A(G_{\Sgain})\) admits a nice partition.

		\item \(G_{\Sgain}\) is blockwise admissible, that is, every
		\((G_\nu)_{\Sgain_\nu}\) admits an \(\Sgain_\nu\)-admissible
		ordering.
	\end{enumerate}
\end{corollary}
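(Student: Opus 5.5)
The plan is to prove the cycle of implications (4)\(\Rightarrow\)(2)\(\Rightarrow\)(1)\(\Rightarrow\)(4), and to attach (3) through the product decomposition. Every step reduces to the block statements already established.

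For (4)\(\Rightarrow\)(2), apply Proposition~\ref{prop:order-to-ss-infty} to each block \(G_\nu\). This gives a maximal modular chain in \(L(c\A_\nu)\) through its hyperplane at infinity, where \(\A_\nu=\A((G_\nu)_{\Sgain_\nu})\).

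Next use the block-product decomposition of Section~2. It should be upgraded from \(L(\A(G_{\Sgain}))\cong L(\A_1\times\cdots\times\A_s)\) to an isomorphism of cone lattices \(L(c\A(G_{\Sgain}))\cong L(c(\A_1\times\cdots\times\A_s))\) that preserves \(H_\infty\). I would argue this by coordinate translations at cut vertices, applied homogeneously: substitute \(x_w\mapsto x_w-x_{\text{cut}}\)-type shifts block by block along the block–cutvertex tree, i.e.\ introduce independent copies of the cut-vertex coordinate in each block, and note that the cone hyperplanes only involve differences within a block. The lattice of the cone over a product with a shared \(y\) is determined by the same linear-algebra data, so the isomorphism holds up to an empty (coordinate) factor.

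Iterating Lemma~\ref{lem:cone-product}\ref{cone-product:gluing} then yields a maximal modular chain through \(H_\infty\) in the product cone. Transporting it through the isomorphism gives (2). The implication (2)\(\Rightarrow\)(1) is trivial.

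For (1)\(\Rightarrow\)(4), transport supersolvability through the same isomorphism. Iterating Lemma~\ref{lem:cone-product}\ref{cone-product:reflection} shows that each block cone \(c\A_\nu\) is supersolvable. Theorem~\ref{thm:main} applied to the block \(G_\nu\), via (1)\(\Rightarrow\)(4) there, then gives an admissible ordering of \((G_\nu)_{\Sgain_\nu}\).

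For (3), the product result \cite[Proposition~3.29]{HogeRoehrle2016} says that \(\A_1\times\cdots\times\A_s\) has a nice partition iff each \(\A_\nu\) does. Since \(L(\A(G_{\Sgain}))\cong L(\A_1\times\cdots\times\A_s)\) and niceness of partitions is a lattice-theoretic property (sections and localizations are read off from \(L\) and the atoms), \(\A(G_{\Sgain})\) admits a nice partition iff every \(\A_\nu\) does. By Theorem~\ref{thm:main} on each block, this holds iff each block is admissible, which is (4).

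The main obstacle is the cone-level isomorphism preserving \(H_\infty\). Section~2 only states the affine product isomorphism, and Lemma~\ref{lem:cone-product} is phrased for \(c(\A_1\times\A_2)\). I would handle it directly: a flat of the cone not in \(H_\infty\) corresponds to an affine flat, while a flat inside \(H_\infty\) corresponds to a flat of the graphic-type arrangement given by the edge set alone. Both kinds decompose blockwise by the cut-vertex translation argument, compatibly with inclusion. The same reasoning was used implicitly in Step~1 of Theorem~\ref{thm:ss-implies-infty}, so I would cite that usage as justification.
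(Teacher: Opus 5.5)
Your proposal is correct and follows essentially the same route as the paper. Like the paper, you pass through the block–cut-forest coordinate change (up to empty factors), use the Hoge–Röhrle product result together with Theorem~\ref{thm:main} for nice partitions, and iterate both parts of Lemma~\ref{lem:cone-product} with the block results for the cone. The cone-level isomorphism you flag as the main obstacle is immediate and needs no flat-by-flat comparison. The coordinate change only shifts the coordinates of each non-root block by a free parameter, so it is linear, preserves every difference $x_u-x_v$ within a block, and extends verbatim to the cones by fixing $y$, hence also $H_\infty$.
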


\begin{proof}
	Choose coordinates along the block--cut forest. If \(G\) has \(c\)
	connected components and \(\varnothing_d\) denotes the empty arrangement
	in \(\K^d\), then
	\[
	\A(G_{\Sgain})\times\varnothing_{s-c}
	\cong
	\A_1\times\cdots\times\A_s,
	\qquad
	\A_\nu=\A((G_\nu)_{\Sgain_\nu}).
	\]
	Empty factors do not change an intersection lattice, either before or
	after coning.
	Products preserve and reflect nice partitions
	\cite[Proposition~3.29]{HogeRoehrle2016}; hence
	\(\A(G_{\Sgain})\) is nice if and only if every \(\A_\nu\) is nice.
	By Theorem~\ref{thm:main}, this is equivalent to blockwise
	admissibility and to supersolvability of every block cone.

	If the full cone is supersolvable, iterating Lemma~\ref{lem:cone-product}\ref{cone-product:reflection} shows that every
	block cone is supersolvable. Conversely, if every block cone is
	supersolvable, Theorem~\ref{thm:main} gives each one a maximal modular
	chain through its hyperplane at infinity. Iterating Lemma~\ref{lem:cone-product}\ref{cone-product:gluing} produces a maximal
	modular chain of \(L(c\A(G_{\Sgain}))\) through \(H_\infty\). This proves
	all four equivalences.
\end{proof}

\begin{remark*}
	The blockwise condition in Corollary~\ref{cor:blockwise-general} cannot
	in general be replaced by a single \(\Sgain\)-admissible ordering of
	\(G_{\Sgain}\). A global ordering imposes additional compatibility
	between the admissible orderings of blocks meeting at a cut vertex.
\end{remark*}

\section{Freeness and Chordality}

Suyama, Torielli, and Tsujie studied the correspondence between freeness
properties of the cone over an affinographic arrangement and those of the
associated bias arrangement \cite{SuyamaTorielliTsujie2024}. Here we obtain
a graph-theoretic necessary condition: freeness of \(c\A(G_{\Sgain})\) forces
chordality of \(G\), although it need not force gain-admissibility.
The proof reduces the obstruction to cycles. An induced
chordless cycle \(C\) gives a localization of \(c\A(G_{\Sgain})\) that is,
up to empty product factors, the cone over the arrangement supported on
\(C\). Thus the key point is to show that such cones over cycles are never free.

\subsection{Cones over cycles}

\begin{lemma}\label{lem:cycle-not-free}
	Let \(C_m\) be a cycle of length \(m\ge4\), and let
	\(\Sgain=(S_i)_{i=1}^m\) be a family of gain sets on its oriented
	edges, with each \(S_i\) nonempty and finite. Then the cone
	\(c\A((C_m)_{\Sgain})\) is not free.
\end{lemma}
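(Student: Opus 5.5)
The plan is to argue by contradiction, comparing the exponents that freeness would force with the characteristic polynomial, and then to rule those exponents out with a lower bound on the degrees of logarithmic derivations. Write \(s_i=|S_i|\ge1\) and \(\B=c\A((C_m)_{\Sgain})\). Then \(\B\) has \(N=1+\sum_i s_i\) hyperplanes. Its rank is \(m\): the forms \(x_i-x_{i+1}\) span a space of rank \(m-1\), and \(y\) adds one more. The all-ones direction in the \(x\)-coordinates lies in every hyperplane, so we may essentialize and work in an \(m\)-dimensional space. Suppose \(\B\) is free. By Terao's factorization, \(\chi(\B,t)=(t-1)\prod_{j=2}^{m}(t-d_j)\), up to the trivial factor from essentialization, where the exponents \(1,d_2,\dots,d_m\) include the Euler derivation.

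\emph{Step 1: compute \(b_2\) and \(b_3\) combinatorially.} Since \(m\ge4\) and the cycle is chordless, no three edge classes are linearly related below rank \(m\). Hence a rank-two flat is one of two kinds. It may be the intersection of two hyperplanes from distinct classes, which contains no further hyperplane. Or it may be \(H_\infty\) together with a whole parallel class \(\A_i\), which consists of \(s_i+1\) hyperplanes. This gives
\[b_2=e_2(s_1,\dots,s_m)+\textstyle\sum_i s_i .\]
Similarly, for \(m\ge4\), the rank-three flats are of two kinds: three hyperplanes from distinct classes, and two classes taken together with \(H_\infty\). From these one reads off \(b_3=e_3(s)+e_2(s)\). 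In fact I expect the whole polynomial to come out as \(\chi(\B,t)=(t-1)\bigl[\prod_i(t-s_i)+(-1)^m c\,(t-?)\bigr]/t\)-type expression via the finite-field method: count tuples of differences \(d_i\notin S_i\) with \(\sum d_i=0\). The precise lower-order correction is needed only for the top coefficients, and it is insensitive to coincidences among the gains until rank \(m-1\). From \(\sum_{j\ge2}d_j=\sum_i s_i\) and the value of \(b_2\), we get that the power sums satisfy \(p_1(d)=p_1(s)\) and \(p_2(d)=p_2(s)\). Here \(d\) has \(m-1\) positive entries and \(s\) has \(m\) positive entries.

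\emph{Step 2: a degree lower bound.} By Ziegler's theorem, freeness of \(\B\) implies that the multirestriction to \(H_\infty\) is free with exponents \(d_2,\dots,d_m\). This multirestriction is the graphic arrangement of \(C_m\) with multiplicities \(s_i\). Essentially, it consists of \(m\) hyperplanes in general position in dimension \(m-1\ge3\). For a nonzero homogeneous derivation \(\theta\) in this multiarrangement, I would use the relation \(\sum_i\alpha_i=0\) among the \(m\) forms \(\alpha_i=x_i-x_{i+1}\). This gives \(\sum_i\theta(\alpha_i)=0\) with \(\alpha_i^{s_i}\mid\theta(\alpha_i)\). Since any \(m-1\) of the \(\alpha_i\) are coordinates, \(\theta\) is determined by the \(\theta(\alpha_i)\). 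A nontrivial syzygy \(\sum_i \alpha_i^{s_i}g_i=0\) among powers of \(m\ge4\) forms in general position forces \(\deg\theta\) to be large. Indeed, the Koszul-type syzygies are the only ones, because powers of independent linear forms form a regular sequence. Hence \(\deg\theta\ge\min_{i\ne j}(s_i+s_j)\). So every exponent satisfies \(d_j\ge s_{(1)}+s_{(2)}\), the sum of the two smallest multiplicities.

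\emph{Step 3: contradiction.} With \(m-1\) exponents, each at least \(s_{(1)}+s_{(2)}\), and summing to \(\sum s_i\), we get \((m-1)(s_{(1)}+s_{(2)})\le\sum_i s_i\). I then combine this with \(p_2(d)=p_2(s)\). Replacing the \(m\) numbers \(s_i\) by \(m-1\) numbers with the same sum, each at least the sum of the two smallest \(s_i\), strictly increases the sum of squares unless some degenerate equality holds. The idea is that merging two parts increases \(p_2\) by \(2s_as_b>0\), and the degree bound prevents compensating by spreading out. I would check the remaining equality cases, for instance all \(s_i\) equal, directly against \(b_3\).

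The main obstacle is Step 2. The regular-sequence argument only directly describes syzygies among the \(\alpha_i^{s_i}\) for \(m-1\) independent forms. Here there are \(m\) forms with one linear relation, so the syzygy module is not simply Koszul, and the bound \(\deg\theta\ge s_{(1)}+s_{(2)}\) may need refining. If it fails, I would instead bound the sum of the two smallest exponents by localizing at the codimension-two flats \(H_\infty\cap\A_i\cap\A_j\) of the multirestriction. I would then aim for an inequality on \(e_2(d)\) strong enough to contradict \(b_2\) directly.
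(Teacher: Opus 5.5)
\textbf{There is a genuine gap in Steps~2 and~3.} The part of Step~1 you actually use is sound and matches the paper: \(\sum_j d_j=\sum_i s_i\) and \(\sum_j d_j^2=\sum_i s_i^2\) are exactly what the paper extracts from \(b_1,b_2\). The problems are the following.

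The uniform bound \(\deg\theta\ge s_{(1)}+s_{(2)}\) is false. If all \(s_i=1\), the Euler derivation lies in the multirestriction and has degree \(1\). For a less trivial case, take \(m=4\), all \(s_i=2\), coordinates \(\alpha_1,\alpha_2,\alpha_3\) on \(H_\infty\), and \(\alpha_4=-(\alpha_1+\alpha_2+\alpha_3)\). The identity
\[
(\alpha_1+\alpha_2+\alpha_3)^2(\alpha_1-\alpha_2)=\alpha_1^2(\alpha_1+\alpha_2+2\alpha_3)-\alpha_2^2(\alpha_1+\alpha_2+2\alpha_3)+\alpha_3^2(\alpha_1-\alpha_2)
\]
shows that \(\theta=\alpha_1^2(\alpha_1+\alpha_2+2\alpha_3)\partial_{\alpha_1}-\alpha_2^2(\alpha_1+\alpha_2+2\alpha_3)\partial_{\alpha_2}+\alpha_3^2(\alpha_1-\alpha_2)\partial_{\alpha_3}\) satisfies \(\alpha_i^2\mid\theta(\alpha_i)\) for \(i=1,\ldots,4\). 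Its degree is \(3<4\). So the worry you flagged is real: the dependent fourth form produces non-Koszul syzygies of low degree.

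Even if the bound held, Step~3 would not close. For \(s=(1,1,10,10)\), the triple \(d=(3,7,12)\) has all entries \(\ge2\), with \(\sum d_j=22=\sum s_i\) and \(\sum d_j^2=202=\sum s_i^2\). A uniform lower bound therefore does not force the sum of squares up.

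Your fallback through \(b_3\) is also unreliable as stated. For \(m=4\) the rank-three flats are coatoms, and four hyperplanes, one from each class, share such a flat whenever their oriented gains sum to zero. So \(b_3=e_3(s)+e_2(s)\) fails in general.

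\textbf{The missing idea} is a staircase bound instead of a uniform one. It comes from the divisibility you already have, used as a vanishing statement: \(\alpha_i^{s_i}\mid\theta(\alpha_i)\) forces \(\theta(\alpha_i)=0\) whenever \(\deg\theta<s_i\).

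Reindex so that \(s_1\le\cdots\le s_m\). The relation \(\sum_i\alpha_i=0\) is symmetric, so \(\alpha_1,\ldots,\alpha_{m-1}\) remain coordinates. Re-index the exponents as \(d_1\le\cdots\le d_{m-1}\).

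Suppose \(d_j<s_{j+1}\). Then the first \(j\) basis derivations kill \(\alpha_{j+1},\ldots,\alpha_m\). By \(\sum_i\alpha_i=0\), each of them equals \(\sum_{i<j}\theta(\alpha_i)(\partial_{\alpha_i}-\partial_{\alpha_j})\). Thus \(j\) columns of the coefficient matrix lie in the span of \(j-1\) fixed vectors, and its determinant vanishes. This is impossible, because the basis must generate the elements \(Q\,\partial_{\alpha_i}\). For \(j=1\) the argument says the first basis element is zero, which is equally impossible.

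Hence \(d_j\ge s_{j+1}\) for all \(j\). Since \(\sum_j(d_j-s_{j+1})=s_1\), we get
\[
\sum_j d_j^2-\sum_i s_i^2=\sum_j(d_j-s_{j+1})(d_j+s_{j+1})-s_1^2\ge 2s_1^2-s_1^2>0,
\]
contradicting Step~1. This is the paper's proof. It works directly with a basis of \(D(\B)\) that contains the Euler derivation, with the other elements normalized to kill \(y\); this is equivalent to your multirestriction set-up. The staircase bound excludes \((3,7,12)\) above, since \(d_2=7<s_3=10\).
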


\begin{proof}
	Label the vertices cyclically as \(1,\ldots,m\), and orient the edges as
	\(1\to2\to\cdots\to m\to1\). Put
	\(u_i=x_i-x_{i+1}\) for \(1\leq i<m\), and set \(t=x_m\). Then
	\[
	\K[x_1,\ldots,x_m,y]
	=
	\K[u_1,\ldots,u_{m-1},t,y],
	\qquad
	u_m:=x_m-x_1=-u_1-\cdots-u_{m-1}.
	\]
	Indeed, \(x_i=t+u_i+\cdots+u_{m-1}\) for \(1\leq i<m\).
	All defining equations of the cone are independent of \(t\). Thus the
	cone is the product of the arrangement described below with the empty
	arrangement in the \(t\)-direction. By the product property for freeness,
	it suffices to work over
	\(R=\K[u_1,\ldots,u_{m-1},y]\).
	For \(1\leq i\leq m\), put
	\(Q_i=\prod_{a\in S_i}(u_i-ay)\) and \(n_i=|S_i|\).
	Let \(\B\) be the central arrangement in \(\K^m\) with defining
	polynomial \(Q=yQ_1\cdots Q_m\), and set \(D=D(\B)\).
	Reindex the pairs \((u_i,S_i)\) so that
	\(n_1\leq\cdots\leq n_m\). The relation
	\(u_1+\cdots+u_m=0\) is unchanged, and any \(m-1\) of the
	\(u_i\) remain linearly independent.

	Suppose that \(\B\) is free. Its hyperplanes intersect only at the
	origin, so every nonzero homogeneous element of \(D\) has positive
	polynomial degree. Here the polynomial degree of a homogeneous
	derivation is the common degree of its nonzero coefficients.
	The Euler derivation
	\(\theta_E=y\partial_y+\sum_{i=1}^{m-1}u_i\partial_{u_i}\)
	can be included in a homogeneous basis of \(D\): its expansion in
	any such basis involves only constant coefficients on degree-one
	basis elements. Replacing every remaining basis element \(\theta\)
	by \(\theta-(\theta(y)/y)\theta_E\), we obtain a homogeneous basis
	\(\theta_E,\delta_1,\ldots,\delta_{m-1}\) with
	\(\delta_j(y)=0\). Write \(d_j=\deg\delta_j\), ordered so that
	\(d_1\leq\cdots\leq d_{m-1}\).
	Let \(M\) be the matrix whose columns are the coefficients of this
	basis relative to
	\(\partial_y,\partial_{u_1},\ldots,\partial_{u_{m-1}}\).
	Since \(Q\partial_y,Q\partial_{u_1},\ldots,Q\partial_{u_{m-1}}\)
	belong to \(D\), expressing them in this basis gives a matrix
	\(C\) over \(R\) with \(MC=QI_m\). Thus
	\(\det(M)\det(C)=Q^m\ne0\), so \(\det(M)\ne0\).
	For each \(i\) and \(a\in S_i\), the logarithmic condition gives
	\(u_i-ay\mid\delta_j(u_i-ay)=\delta_j(u_i)\).
	For fixed \(i\), these linear forms are pairwise coprime, so
	\(Q_i\mid\delta_j(u_i)\). In particular,
	\(\delta_j(u_i)=0\) whenever \(d_j<n_i\). We deduce that
	\[
	d_j\geq n_{j+1}\qquad(1\leq j\leq m-1).
	\]
	Indeed, if \(d_j<n_{j+1}\), then
	\(\delta_k(u_i)=0\) for \(k\leq j\) and \(i\geq j+1\).
	Applying \(\delta_k\) to \(u_1+\cdots+u_m=0\) gives
	\(\delta_k(u_j)=-\sum_{i=1}^{j-1}\delta_k(u_i)\).
	Since \(\delta_k(y)=0\), we obtain
	\[
	\delta_k=\sum_{i=1}^{j-1}\delta_k(u_i)
	(\partial_{u_i}-\partial_{u_j})
	\qquad(1\leq k\leq j).
	\]
	The corresponding \(j\) columns of \(M\) are therefore
	\(R\)-linear combinations of \(j-1\) fixed columns.
	Multilinearity of the determinant gives \(\det(M)=0\), a
	contradiction. For \(j=1\), the displayed sum is empty and
	\(\delta_1=0\), which gives the same contradiction.

	Write \(\chi(\B,t)=t^m-b_1t^{m-1}+b_2t^{m-2}+\cdots\).
	Since \(m\geq4\), any three distinct \(u_i\) are linearly
	independent. The rank-two flats are therefore
	\(X_i=\{y=u_i=0\}\), with \(|\B_{X_i}|=n_i+1\), and
	intersections of hyperplanes from distinct classes, each contained
	in exactly two hyperplanes. Using
	\(\mu(\K^m,X)=|\B_X|-1\), we obtain
	\[
	b_1=1+\sum_{i=1}^m n_i,
	\qquad
	b_2=\sum_{i=1}^m n_i+\sum_{1\leq i<k\leq m}n_in_k.
	\]
	Freeness gives
	\(\chi(\B,t)=(t-1)\prod_{j=1}^{m-1}(t-d_j)\)
	\cite[Chapter~4]{OrlikTerao1992}.
	Comparing the first two coefficients yields
	\[
	\sum_{j=1}^{m-1}d_j=\sum_{i=1}^m n_i,
	\qquad
	\sum_{j=1}^{m-1}d_j^2=\sum_{i=1}^m n_i^2.
	\]
	However, \(d_j\geq n_{j+1}\geq n_1>0\) and
	\(\sum_{j=1}^{m-1}(d_j-n_{j+1})=n_1\), so
	\[
	\sum_{j=1}^{m-1}d_j^2-\sum_{i=1}^m n_i^2
	=\sum_{j=1}^{m-1}(d_j-n_{j+1})(d_j+n_{j+1})-n_1^2
	\geq n_1^2>0,
	\]
	a contradiction. Thus \(\B\), and hence
	\(c\A((C_m)_{\Sgain})\), is not free.
\end{proof}

\subsection{Chordality and Terao's conjecture}

\begin{proof}[Proof of Theorem~\ref{thm:free-implies-chordal}]
	Suppose that \(G\) is not chordal. Then \(G\) contains an induced cycle
	\(C\) of length at least four. Let \(X\) be the flat of
	\(c\A(G_{\Sgain})\) defined by
	\[
	y=0,
	\qquad
	x_u=x_v\quad \text{for all }u,v\in V(C).
	\]
	This is indeed a flat: if one chooses a gain \(a_e\in S_e\) on each edge
	\(e\) of \(C\), where \(S_e\) denotes the gain set on that edge, then
	\(X\) is the intersection of \(H_\infty\) with the hyperplanes
	\(cH_e^{a_e}\), \(e\in E(C)\).
	A cone hyperplane \(cH_{ij}^a\) contains \(X\) precisely when both
	endpoints \(i,j\) lie in \(V(C)\). Since \(C\) is induced, these are
	exactly the edges of \(C\). Therefore the localization
	\((c\A(G_{\Sgain}))_X\) is, up to empty product factors, the cone
	\(c\A(C_{\Sgain|C})\), where \(\Sgain|C\) denotes the restriction of
	\(\Sgain\) to the edges of \(C\).
	Localizations of free arrangements are free \cite[Chapter~4]{OrlikTerao1992}.
	Thus freeness of
	\(c\A(G_{\Sgain})\) would imply freeness of \(c\A(C_{\Sgain|C})\), in
	contradiction to Lemma~\ref{lem:cycle-not-free}. Hence \(G\) has
	no induced cycle of length at least four, and so \(G\) is chordal.
\end{proof}

\begin{remark*}
	Assume in this remark that \(\operatorname{char}\K=0\).
	Theorem~\ref{thm:free-implies-chordal} cannot be strengthened from
	``\(G\) is chordal'' to ``\(G_{\Sgain}\) has an
	\(\Sgain\)-admissible ordering''. Indeed, the cones
	over the classical Shi and Catalan arrangements of type \(A_{n-1}\) are
	free \cite{AbeTerao2011}. Their underlying graph is \(K_n\), hence
	chordal. However, for \(n\ge3\), neither arrangement admits an
	\(\Sgain\)-admissible ordering; this is verified in
	Example~\ref{ex:shi-catalan-linial} below. Thus freeness rules out
	chordless cycles, but it does not detect the full gain-admissibility
	condition.
\end{remark*}

We conclude by relating Theorem~\ref{thm:free-implies-chordal} to Terao's
conjecture; in this discussion assume that \(\operatorname{char}\K=0\).
The conjecture asks whether freeness of a central arrangement over a fixed
field is determined by its intersection lattice; see
\cite[Chapter~4]{OrlikTerao1992}. Theorem~\ref{thm:free-implies-chordal} is not a freeness criterion for the cones
\(c\A(G_{\Sgain})\). It gives a necessary obstruction detected by
localization: if \(G\) contains an induced chordless cycle, then the cone has
a non-free localization over \(H_\infty\).

Let \(\B=c\A(G_{\Sgain})\). The Ziegler restriction of \(\B\) to
\(H_\infty\) is the graphic multiarrangement \cite{Ziegler1989}
\[
(\B^{H_\infty},m)=(\A(G),m),
\qquad
m(H_{ij}^{0})=|S_{ij}|.
\]
Thus, once \(H_\infty\) is distinguished, the pair
\((L(\B),H_\infty)\) records not only the graphic arrangement at infinity,
but also the multiplicities determined by the gain sets. Hence
the freeness problem for these cones is not reduced to the ordinary graphic
arrangement: the affine incidences and the multiplicities at infinity both
enter the lattice.

When all gain sets are \(\{0\}\), the cone is the product of the ordinary
graphic arrangement with a one-dimensional Boolean arrangement. In that
case freeness is equivalent to chordality
\cite{Stanley1972,EdelmanReiner1994}. Theorem~\ref{thm:free-implies-chordal} extends the necessary direction of this
criterion to arbitrary finite gain sets: freeness of \(c\A(G_{\Sgain})\)
still forces \(G\) to be chordal.

\section{Characteristic Polynomial and Examples}

\subsection{Characteristic polynomial}

\begin{corollary}\label{cor:characteristic-polynomial}
	Let \(G\) have \(c\) connected components and let
	\(G_1,\ldots,G_s\) be its blocks containing at least one edge. Assume that
	\(G_{\Sgain}\) is blockwise admissible,
	and choose an admissible ordering
	\(v_{\nu,1},\ldots,v_{\nu,m_\nu}\) for every
	\((G_\nu)_{\Sgain_\nu}\). Put
	\[
	d_{\nu,k}
	=
	\sum_{i>k}|S_{v_{\nu,k}v_{\nu,i}}|
	\qquad
	(1\le k<m_\nu).
	\]
	Then
	\[
	\chi(\A(G_{\Sgain}),t)
	=
	t^c
	\prod_{\nu=1}^{s}
	\prod_{k=1}^{m_\nu-1}(t-d_{\nu,k}).
	\]
\end{corollary}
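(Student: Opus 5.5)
We reduce to a single block and then use supersolvability of the cone. First, by the block decomposition of Section~2, \(L(\A(G_{\Sgain}))\cong L(\A_1)\times\cdots\times L(\A_{s'})\), where the \(s'\) blocks include isolated vertices. More precisely, choosing coordinates along the block--cut forest as in the proof of Corollary~\ref{cor:blockwise-general}, we have \(\A(G_{\Sgain})\times\varnothing_{s'-c}\cong\prod_\nu\A_\nu\). The characteristic polynomial is multiplicative on products, and \(\chi(\varnothing_d,t)=t^d\). Hence
\[
t^{s'-c}\chi(\A(G_{\Sgain}),t)=\prod_{\nu}\chi(\A_\nu,t).
\]
An isolated-vertex block contributes \(\chi=t\). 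For a block \(G_\nu\) with \(m_\nu\ge2\) vertices, we will show \(\chi(\A_\nu,t)=t\prod_{k<m_\nu}(t-d_{\nu,k})\). Counting factors of \(t\): the blocks with edges contribute \(s\) factors, the isolated vertices contribute \(s'-s\), and dividing by \(t^{s'-c}\) leaves \(t^c\). This is the claimed formula.

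For a single block \(G\) with admissible ordering \(v_1,\ldots,v_n\), we use the standard relation \(\chi(c\A,t)=(t-1)\chi(\A,t)\) between an affine arrangement and its cone. We then compute \(\chi(c\A(G_{\Sgain}),t)\) by Stanley's factorization theorem for supersolvable lattices: \(\chi=t^{\dim\hat 1}\prod_i(t-e_i)\), where \(e_i=|\A_{X_i}\setminus\A_{X_{i-1}}|\) for a maximal modular chain \(X_0<\cdots<X_r\).

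The chain to use is the one built in Proposition~\ref{prop:order-to-ss-infty}. Unwinding that induction, at each stage one removes the first remaining vertex \(v_k\) and takes the modular coatom \(X\), whose complement in the current localization is exactly \(\{cH_{v_kv_i}^a\mid i>k,\ a\in S_{v_kv_i}\}\). This set has \(d_k\) elements. So, reading the lifted chain from the top, its successive differences have sizes \(d_1,d_2,\ldots,d_{n-1}\), and the bottom step \(\widetilde V<H_\infty\) has size \(1\). The rank is \(n\), since \(G\) is connected, and \(\dim\hat1=1\) because the lineality direction \((1,\ldots,1,0)\) survives. Therefore
\[
\chi(c\A,t)=t(t-1)\prod_{k=1}^{n-1}(t-d_k),
\]
and dividing by \((t-1)\) gives \(\chi(\A(G_{\Sgain}),t)=t\prod_{k<n}(t-d_k)\), as needed.

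The main point to check carefully is this bookkeeping of the chain produced by Proposition~\ref{prop:order-to-ss-infty} and Lemma~\ref{lem:lifting}. We must confirm that at each inductive step the complement of the coatom consists exactly of the hyperplanes in the classes \(\A_{v_kv_i}\) with \(i>k\). We must also confirm that the localization \(\B_X\) is the cone over \(G-v_1\), so that the sizes compose correctly, and that the parts of the lifted chain are nested as stated. An alternative way to confirm the count is that the induced nice partition of the affine arrangement (Theorem~\ref{thm:nice-order}) has parts \(\pi_k\) of size \(d_k\), together with the known factorization of \(\chi\) for arrangements admitting a nice partition. Both routes give the same factors, and we will rely on the supersolvable factorization because it applies directly to the cone.
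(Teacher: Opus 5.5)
Your proof is correct. On a single block it coincides with the paper's argument: the modular chain from Proposition~\ref{prop:order-to-ss-infty} has steps of sizes \(1,d_{n-1},\ldots,d_1\), the factorization theorem applies to the cone, and one divides by \(t-1\).

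The difference is in how the blocks are combined. The paper never factors \(\chi(\A(G_{\Sgain}),t)\) block by block. Instead it glues the block chains with Lemma~\ref{lem:cone-product}\ref{cone-product:gluing} into one maximal modular chain of the whole cone \(c\A(G_{\Sgain})\). The copies of \(H_\infty\) merge into a single singleton part, all other part sizes are unchanged, and the factorization is applied once, with center of dimension \(c\). You instead pass to the affine product \(\A(G_{\Sgain})\times\varnothing_{s'-c}\cong\prod_\nu\A_\nu\) and use multiplicativity of \(\chi\) together with \(\chi(\varnothing_d,t)=t^d\). You then apply the supersolvable factorization separately to each block cone.

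Your route needs only these standard facts about products and avoids tracking how part sizes behave under the gluing lemma. The cost is the \(t^{s'-c}\) bookkeeping for isolated vertices and empty factors. The paper's route has the advantage of exhibiting all the roots \(1\) and \(d_{\nu,k}\) as the step sizes of one explicit maximal modular chain of the full cone, consistent with Corollary~\ref{cor:blockwise-general}.
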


\begin{proof}
	For every block, the inductive construction in Proposition~\ref{prop:order-to-ss-infty} gives a modular chain through its hyperplane
	at infinity. At the step that removes \(v_{\nu,k}\), the hyperplanes not
	containing the new coatom are precisely
	\(cH_{v_{\nu,k}v_{\nu,i}}^a\), where \(i>k\) and
	\(a\in S_{v_{\nu,k}v_{\nu,i}}\). Thus the nice partition induced by this
	chain has part sizes \(
	1,d_{\nu,1},\ldots,d_{\nu,m_\nu-1}\).
	Iterating Lemma~\ref{lem:cone-product}\ref{cone-product:gluing} identifies
	the copies of \(H_\infty\) into one singleton part and leaves all other
	part sizes unchanged. The center of \(c\A(G_{\Sgain})\) has dimension
	\(c\). The factorization associated with this nice partition
	\cite{Terao1992} therefore gives
	\[
	\chi(c\A(G_{\Sgain}),t)
	=
	t^c(t-1)
	\prod_{\nu=1}^{s}
	\prod_{k=1}^{m_\nu-1}(t-d_{\nu,k}).
	\]
	The standard identity \(\chi(c\A,t)=(t-1)\chi(\A,t)\) yields the stated
	formula.
\end{proof}

\begin{remark*}
	Assume that \(G\) is a block with at least two vertices, and let
	\(v_1,\ldots,v_n\) be an admissible ordering for \(G_{\Sgain}\).
	Choose any finite set \(T\subseteq\K\) containing
	\(S_{v_{n-1}v_n}\), and replace \(S_{v_{n-1}v_n}\) by \(T\), together
	with the corresponding replacement \(S_{v_nv_{n-1}}=-T\). The same
	ordering is still admissible, because \(S_{v_{n-1}v_n}\) occurs only on
	the right-hand side of the defining inclusions. Thus, by Theorem~\ref{thm:main}, adding the hyperplanes
	\[
	H_{v_{n-1}v_n}^{a}\qquad (a\in T\setminus S_{v_{n-1}v_n})
	\]
	preserves supersolvability of the cone.
\end{remark*}

\subsection{Examples}

\begin{example}[Graphic arrangements]
	If \(S_{ij}=\{0\}\) for every edge \(ij\), then
	\(\A(G_{\Sgain})\) is the ordinary graphic arrangement \(\A(G)\).
	Here admissible orderings are precisely perfect elimination orderings.
	Hence Corollary~\ref{cor:blockwise-general} recovers Stanley's
	characterization of supersolvable graphic arrangements: \(\A(G)\) is
	supersolvable if and only if \(G\) is chordal.
\end{example}

\begin{example}[A four-cycle and a chord]\label{ex:four-cycle-chord}
	Let \(C_4\) be the cycle on vertices \(1,2,3,4\). For any nonempty
	gain sets on its edges, Lemma~\ref{lem:cycle-not-free} gives that
	\(c\A((C_4)_{\Sgain})\) is not free.
	
	Now add the chord \(13\), and call the resulting graph \(G\). Then
	\(G\) is chordal; for instance, \(2,4,1,3\) is a perfect elimination
	ordering. The same ordering is \(\Sgain\)-admissible precisely when
	\[
	S_{23}-S_{21}\subseteq S_{13}
	\qquad\text{and}\qquad
	S_{43}-S_{41}\subseteq S_{13}.
	\]
	Thus this ordering is admissible exactly under these two gain conditions;
	whenever they hold, Theorem~\ref{thm:main} makes the cone supersolvable.
	For ordinary
	gains \(S_{ij}=\{0\}\), both inclusions hold. In contrast, if
	\(S_{ij}=\{0,1\}\) for the four cycle edges written with \(i<j\), and
	\(S_{13}=\{0\}\) for the chord, then the required inclusion fails for
	both simplicial vertices \(2\) and \(4\). These are the only simplicial
	vertices of \(G\), so there is no \(\Sgain\)-admissible ordering. Thus
	chordality alone is not enough for supersolvability in this setting.
\end{example}

\begin{example}[Nested \(N\)-Ish arrangements]
	Over a field of characteristic zero, Abe, Suyama, and Tsujie proved
	that the cone over an \(N\)-Ish
	arrangement is supersolvable precisely when the sets
	\(N_2,\ldots,N_n\) form a nest \cite{AbeSuyamaTsujie2017}. In our
	notation, choose finite sets \(N_2,\ldots,N_n\subseteq\K\) and set
	\[
	S_{1j}=N_j\quad (2\le j\le n),
	\qquad
	S_{ij}=\{0\}\quad (2\le i<j\le n).
	\]
	Let \(u_1,\ldots,u_{n-1}\) be a permutation of \(2,\ldots,n\). Then
	\(u_1,\ldots,u_{n-1},1\) is an admissible ordering if and only if \(
	N_{u_1}\subseteq N_{u_2}\subseteq\cdots\subseteq N_{u_{n-1}}\).
	Indeed, the only nontrivial inclusions are
	\[
	S_{u_r u_s}-S_{u_r1}
	=
	\{0\}-(-N_{u_r})
	=
	N_{u_r}
	\subseteq
	N_{u_s}
	=
	S_{1u_s}
	\qquad (r<s).
	\]
	Thus Corollary~\ref{cor:blockwise-general} gives supersolvability
	for every nested family \(N_2,\ldots,N_n\).
\end{example}

\begin{example}[Shi, Catalan, and Linial arrangements]\label{ex:shi-catalan-linial}
	Assume that \(\operatorname{char}\K=0\), and let \(G=K_n\), with
	\(n\ge3\). The classical Shi, Catalan, and Linial
	arrangements are obtained, respectively, by taking
	\[
	S_{ij}=\{0,1\},\qquad
	S_{ij}=\{-1,0,1\},\qquad
	S_{ij}=\{1\}
	\]
	for \(i<j\); see \cite{PostnikovStanley2000}. None of the corresponding
	cones is supersolvable. Let \(v_k\) be the first vertex in any
	ordering, and choose two later vertices \(v_i,v_j\).
	\begin{itemize}[leftmargin=2em]
		\item For the Shi arrangement, the two sets \(S_{v_kv_i}\) and
		\(S_{v_kv_j}\) are each either \(\{0,1\}\) or \(\{0,-1\}\). If the
		two signs agree, their difference contains both \(1\) and \(-1\),
		while \(S_{v_iv_j}\) contains only one nonzero sign. If the signs
		differ, their difference contains \(2\) or \(-2\). In either case the
		admissibility inclusion fails.
		\item For the Catalan arrangement,
		\[
		S_{v_kv_j}-S_{v_kv_i}\supseteq \{-2,-1,0,1,2\},
		\]
		which is not contained in \(S_{v_iv_j}\).
		\item For the Linial arrangement, each set \(S_{ij}\) is either
		\(\{1\}\) or \(\{-1\}\), while
		\(S_{v_kv_j}-S_{v_kv_i}\) is either \(\{0\}\), \(\{2\}\), or
		\(\{-2\}\). None of these sets is contained in \(S_{v_iv_j}\).
	\end{itemize}
	In each case admissibility fails for every ordering, so Theorem~\ref{thm:main} shows that the cone is not supersolvable and the
	affine arrangement has no nice partition.
\end{example}

\section*{Declaration on the Use of AI}

During the preparation of this manuscript, the authors used AI-assisted
tools for language editing, \LaTeX{} organization, consistency and
computational checks, and the generation and exploration of possible
mathematical arguments. In particular, AI-generated suggestions played a
substantial role in developing the argument proving that freeness of
\(c\A(G_{\Sgain})\) forces \(G\) to be chordal.
The authors independently verified every step of this argument and reviewed,
revised, and finalized all AI-assisted output. The authors take full
responsibility for the correctness and content of the manuscript.

\end{document}